\documentclass[preprint,superscriptaddress,nofootinbib,11pt]{revtex4-2}

\usepackage[T1]{fontenc}
\usepackage[utf8]{inputenc}
\usepackage{amsmath,amssymb,amsthm,mathtools,bm}
\usepackage{microtype}
\usepackage{enumitem}
\usepackage{graphicx}
\usepackage{tikz}
\usetikzlibrary{arrows.meta,positioning}
\usepackage{booktabs}
\usepackage{array}
\usepackage[hidelinks]{hyperref}

\newtheorem{theorem}{Theorem}[section]
\newtheorem{proposition}[theorem]{Proposition}
\newtheorem{corollary}[theorem]{Corollary}
\newtheorem{lemma}[theorem]{Lemma}
\theoremstyle{definition}
\newtheorem{definition}[theorem]{Definition}
\theoremstyle{remark}
\newtheorem{remark}[theorem]{Remark}

\newcommand{\R}{\mathbb{R}}
\newcommand{\one}{\mathbf{1}}
\newcommand{\Hzero}{\mathcal{H}_0}
\newcommand{\PiH}{\Pi_{\mathrm{Hodge}}}
\newcommand{\Picc}{\Pi_{\mathrm{cc}}}
\newcommand{\Mcc}{\mathcal{M}_{\mathrm{cc}}}
\newcommand{\ee}{\mathrm e}
\DeclareMathOperator{\imop}{im}

\providecommand{\authorcontributions}[1]{%
  \par\medskip\noindent\textbf{Author Contributions.} #1\par\medskip
}

\begin{document}

\title{\Large The Cactus Criterion: When Nonlinear Hodge Theory Reduces to Linear on Graphs}

\author{Sebastian Pardo-Guerra}
\affiliation{Recognition Physics Institute, Austin, Texas 78701, USA}
\affiliation{Center for Engineered Natural Intelligence, University of California, San Diego, CA 92093, USA}

\author{Anil Thapa}
\affiliation{Recognition Physics Institute, Austin, Texas 78701, USA}

\author{Jonathan Washburn}
\affiliation{Recognition Physics Institute, Austin, Texas 78701, USA}

\begin{abstract}
Let $G$ be a finite connected simple graph with a chosen orientation of its edges. For the edge potential
$\psi(t)=\cosh t-1,$
we minimize
$\sum_{e\in E^\to}\psi(z_e)$
over each affine class $\omega+dC^0(G)\subset C^1(G)$. The minimizer is the unique representative satisfying the nonlinear coclosed equation
$\delta\sinh z=0,$
and hence defines a nonlinear selector
$\Picc:C^1(G)\to C^1(G).$
We show that $\Picc$ is real analytic, identify its image as
$\imop \Picc=\Mcc=\operatorname{arsinh}(\ker\delta),$
and compute its differential as a weighted Hodge projector. In particular, $\Picc$ agrees with the ordinary Hodge projector $\PiH$ to first order at the origin, and the first nonlinear correction is cubic. Our main global theorem is a graph-theoretic criterion: for every admissible edge potential---even, $C^2$, strictly convex, and non-quadratic---the associated nonlinear selector coincides with $\PiH$ on all of $C^1(G)$ if and only if $G$ is a cactus graph. Finally, we work out the two-triangle graph, the smallest connected simple obstruction, and record a self-concordant Newton method for computing $\Picc$.
\end{abstract}

\maketitle 
\clearpage
\tableofcontents
\clearpage

\section{Introduction}

On a finite connected graph $G$ with a chosen orientation of its edges, the classical discrete Hodge theorem decomposes every edge field into the sum of a gradient and a coclosed remainder. Going back to Eckmann's combinatorial refinement of Hodge's decomposition theorem and to Dodziuk's finite-difference approach \cite{hodge1941,eckmann,dodziuk,dodziuk_fd}, this decomposition has become the linear workhorse of discrete calculus on graphs, underpinning the spectral theory of combinatorial Laplacians \cite{friedman,chung1997,horak_jost}, the graph-theoretic formulation of discrete exterior calculus \cite{grady,kozlov,lim}, and a growing list of applications in ranking, edge-flow analysis, and higher-order network models \cite{jiang_hodge,schaub_hodge}. Concretely, each affine class $[\omega]=\omega+dC^0(G)$ contains a unique representative $z$ satisfying the linear coclosed equation $\delta z=0$, and the assignment $\omega\mapsto z$ is the orthogonal Hodge projector $\PiH$. This $\PiH$ is the linear benchmark throughout the paper.

The present work asks what happens when the quadratic Dirichlet energy $\tfrac12\sum_e z_e^2$ underlying $\PiH$ is replaced by a genuinely nonlinear convex edge potential. Our principal example is
\begin{equation}
\psi(t)=\cosh t-1,
\label{eq:intro_psi}
\end{equation}
which under the change of variables $x=\ee^{t}$ is the reciprocal cost $J(x)=\tfrac12(x+x^{-1})-1$ recently characterized as canonical in~\cite{rs_cost}. The same cost serves as the keystone input of the cost-first discrete ledger framework of~\cite{PSTW2026Ledger}, where it governs balanced postings on a directed graph; here it reappears as the edge potential of a variational problem on the same graph topology. For our purposes the important features of $\psi$ are that it is even, strictly convex, real analytic, and explicit enough to admit closed-form manipulations. By strict convexity, the $\cosh$-energy $\sum_{e}(\cosh z_e-1)$ has a unique minimizer within each affine class $[\omega]$, and its Euler--Lagrange equation
\begin{equation}
\delta\sinh z=0
\label{eq:intro_coclosed}
\end{equation}
is the natural nonlinear analogue of $\delta z=0$. Writing $\Picc(\omega)$ for the unique element of $[\omega]$ that solves~\eqref{eq:intro_coclosed} defines a globally well-posed \emph{nonlinear selector} $\Picc\colon C^1(G)\to C^1(G)$, which plays the same structural role for $\psi$ that $\PiH$ plays in the quadratic theory.

The comparison of $\Picc$ and $\PiH$ is nontrivial for two opposing reasons. Near the origin the Taylor expansion $\cosh t-1=\tfrac12 t^2+\tfrac1{24}t^4+O(t^6)$ shows that $\Picc$ is a higher-order deformation of $\PiH$, so the two selectors are forced to agree at least to first order. Away from the origin, however, the hyperbolic sine in~\eqref{eq:intro_coclosed} bends the coclosed condition away from its linear counterpart, and no algebraic identity forces $\Picc=\PiH$ to persist. The paper is organized around the question that results: \emph{when does $\Picc$ coincide with $\PiH$, and how does the discrepancy first manifest itself when they differ?} We give a complete answer. The local part of the story (how $\Picc$ deviates from $\PiH$ to leading order) is worked out for the $\cosh$-potential above; the global part (on which graphs the two selectors agree everywhere) turns out to hold not only for $\psi(t)=\cosh t-1$ but for every \emph{admissible} edge potential, that is, every even $C^2$ strictly convex non-quadratic $\psi$.

Our main results fall into four parts. First, in Theorem~\ref{thm:companion}, every affine class contains exactly one nonlinear coclosed representative, obtained as the unique minimizer of the $\cosh$-energy on the mean-zero slice $\Hzero$; this minimizer is energetically separated from every other representative in the same class by an explicit quadratic gap inequality, which strengthens uniqueness to a quantitative stability statement. As a byproduct, the $\cosh$-energy is standard self-concordant, so a two-phase damped/full Newton method converges globally with an explicit iteration bound. Second, Theorems~\ref{thm:smoothness}--\ref{thm:weighted_hodge} show that the image of $\Picc$ is a smooth embedded submanifold of $C^1(G)$,
\begin{equation}
\imop\Picc=\Mcc=\operatorname{arsinh}(\ker\delta),
\label{eq:intro_image}
\end{equation}
of dimension $\beta_1(G)$, that $\Picc$ is real analytic, and that at every point $\omega$ its differential is a \emph{weighted} Hodge projector whose edge weights $w_e=\cosh(\Picc(\omega)_e)$ are determined by the output. The nonlinear selector therefore acts infinitesimally as a linear Hodge projector, but with weights that follow the solution itself. Third, in Theorem~\ref{thm:cubic} we compute the first nontrivial Taylor correction to the linear Hodge projector: the linear-order term of $\Picc$ at the origin is $\PiH$ itself, the second-order term vanishes identically because $\psi'=\sinh$ is odd, and the leading nonzero correction is cubic. Its coefficient is a closed-form expression built from $\PiH$, $L_0^{-1}$, and the coboundary $d$.

The fourth and headline result is the \emph{cactus criterion}. Recall that a connected graph is a \emph{cactus} if every edge lies on at most one simple cycle, equivalently if any two simple cycles share at most one vertex; cactus graphs are the Husimi trees originally introduced in Husimi's study of cluster integrals in statistical mechanics and enumerated by Harary and Uhlenbeck \cite{husimi1950,harary,diestel}. Theorem~\ref{thm:cactus} asserts that for every admissible potential $\psi$,
\[
\Pi_\psi=\PiH \text{ on all of } C^1(G)
\ \ \Longleftrightarrow\ \ 
G \text{ is a cactus graph.}
\]
The criterion is thus \emph{universal in $\psi$}: the same purely graph-theoretic condition controls the collapse for every admissible potential simultaneously. Intuitively, on a cactus the simple cycles occupy edge-disjoint supports, so odd nonlinearities act on each cycle independently and cannot couple them; on a non-cactus graph, any two cycles sharing an edge feed through the nonlinearity and produce an unavoidable obstruction. A worked computation on the two-triangle graph, the smallest connected simple non-cactus graph, makes this obstruction explicit: the nonlinear coclosed system reduces there to a single transcendental equation in one scalar unknown and can be compared numerically to its linear counterpart.

The remainder of the paper is organized as follows. Section~\ref{sec:setup} fixes the discrete-calculus notation, proves the variational existence theorem, and establishes the self-concordance bound that underlies the certified Newton method. Section~\ref{sec:projection} packages the minimizer assignment into the global map $\Picc$, identifies its image as the submanifold~\eqref{eq:intro_image}, and computes its differential. Section~\ref{sec:cubic} derives the cubic expansion of $\Picc$ near the origin. Section~\ref{sec:cactus} introduces admissible potentials and proves the cactus criterion. Finally, Section~\ref{sec:example} illustrates the whole picture on the two-triangle graph.

\section{Setup and the existence theorem}
\label{sec:setup}

This section has two goals. First, we fix the discrete-calculus conventions and recall the classical linear Hodge decomposition on a connected graph; this decomposition realizes the benchmark projector $\PiH$ against which the nonlinear theory is compared throughout. Second, we pose the variational problem for the $\cosh$-energy and prove that every cohomology class contains exactly one nonlinear coclosed representative. A short final subsection supplements the existence theorem with a self-concordance bound and a certified Newton method. Standard references for the discrete-calculus background are \cite{grady,lim}.

\subsection{Discrete calculus on a graph}
\label{sec:discrete_calculus}

Throughout the paper, $G=(V,E)$ is a finite, connected, simple, undirected graph, and $E^\to$ is a fixed orientation of its edges. The real vector spaces of vertex and edge functions are
\begin{equation}
C^0(G):=\R^V,
\qquad
C^1(G):=\R^{E^\to},
\end{equation}
each equipped with its standard Euclidean inner product; the associated norms are denoted $\|\cdot\|_{C^0}$ and $\|\cdot\|_{C^1}$. For $\xi,\zeta\in C^1(G)$ we write $(\xi\odot\zeta)(e):=\xi(e)\zeta(e)$ for edgewise multiplication, and $\xi^{\odot n}:=\xi\odot\cdots\odot\xi$ ($n$ factors). The vector $\one\in C^0(G)$ is the constant-one vertex function, and for $u\in C^0(G)$ we write $\bar u:=|V|^{-1}\sum_{v\in V}u_v$ for its mean.

The coboundary $d\colon C^0(G)\to C^1(G)$ and its $\ell^2$-adjoint $\delta\colon C^1(G)\to C^0(G)$ are defined by
\begin{equation}
(du)(e):=u(e_+)-u(e_-),
\qquad
(\delta\omega)(v):=\sum_{e_+=v}\omega(e)-\sum_{e_-=v}\omega(e),
\label{eq:coboundary}
\end{equation}
and satisfy the adjoint identity $\langle\omega,du\rangle_{C^1}=\langle\delta\omega,u\rangle_{C^0}$. The graph Laplacian is $L_G:=\delta d$. Most of the analysis below takes place on the mean-zero slice
\begin{equation}
\Hzero:=\Bigl\{u\in C^0(G):\sum_{v\in V}u_v=0\Bigr\},
\end{equation}
on which $L_G$ restricts to an isomorphism. A single structural observation about $\delta$ is used repeatedly throughout the paper.

\begin{lemma}
\label{lem:im_delta_hzero}
For every $\omega\in C^1(G)$ one has $\sum_{v\in V}(\delta\omega)(v)=0$, and hence $\imop\delta\subset\Hzero$.
\end{lemma}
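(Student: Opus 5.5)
The plan is to prove the vanishing-sum identity by a double-counting argument on the edge sum, and then read off the inclusion $\imop\delta\subset\Hzero$ directly from the definition of $\Hzero$.

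First, I will expand $\sum_{v\in V}(\delta\omega)(v)$ using the definition \eqref{eq:coboundary}. This gives the difference of two double sums: $\sum_{v}\sum_{e_+=v}\omega(e)$ minus $\sum_{v}\sum_{e_-=v}\omega(e)$. Every oriented edge $e\in E^\to$ has exactly one head $e_+$ and exactly one tail $e_-$. Hence, after exchanging the order of summation, each double sum equals $\sum_{e\in E^\to}\omega(e)$. The two contributions therefore cancel and the total is zero.

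For a more conceptual alternative, I would use the adjoint identity $\langle\omega,du\rangle_{C^1}=\langle\delta\omega,u\rangle_{C^0}$ with $u=\one$. Since $(d\one)(e)=1-1=0$ for every edge, this gives $\sum_v(\delta\omega)(v)=\langle\delta\omega,\one\rangle_{C^0}=\langle\omega,d\one\rangle_{C^1}=0$. I would present this one-line version and mention the double count as the underlying reason.

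Since $\Hzero$ is defined as exactly the set of vertex functions with zero sum, the inclusion $\imop\delta\subset\Hzero$ follows immediately. There is no real obstacle here. The only point that needs care is the bookkeeping: each edge must be counted once as outgoing and once as incoming. Simplicity of the graph guarantees $e_+\neq e_-$, although the cancellation would hold even without that assumption.
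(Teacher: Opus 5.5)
Your proposal is correct, and your primary argument is exactly the paper's one-line proof: each oriented edge contributes $\omega(e)$ once at its head and once with a minus sign at its tail, so the vertex sum cancels. Your alternative via the adjoint identity with $u=\one$ and $d\one=0$ is also valid; it is the same double count written as summation by parts.
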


\begin{proof}
Each oriented edge contributes $+1$ at its head and $-1$ at its tail, so the total sum telescopes to zero.
\end{proof}

\subsection{Linear Hodge decomposition}
\label{sec:linear_hodge}

Because $\delta$ maps into $\Hzero$ by Lemma~\ref{lem:im_delta_hzero}, the Laplacian restricts to an operator $L_0:=L_G|_{\Hzero}\colon\Hzero\to\Hzero$, and the linear Hodge projector
\begin{equation}
\PiH:=I-dL_0^{-1}\delta\colon C^1(G)\to C^1(G)
\end{equation}
is well-defined provided $L_0$ is invertible. The next lemma records the standard fact that it is, and identifies $\PiH$ as an orthogonal projector. Both the operators and the decomposition that follows will be used in essentially every proof below, so we include the short argument for completeness.

\begin{lemma}
\label{lem:linear_hodge}
The operator $L_0$ is an isomorphism of $\Hzero$, the Hodge decomposition
\begin{equation}
C^1(G)=\ker\delta\oplus dC^0(G)
\label{eq:hodge_decomp}
\end{equation}
holds, and $\PiH$ is the orthogonal projector onto $\ker\delta$ along $dC^0(G)$.
\end{lemma}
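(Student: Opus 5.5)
The plan is to derive everything from the adjoint identity $\langle\omega,du\rangle_{C^1}=\langle\delta\omega,u\rangle_{C^0}$ together with Lemma~\ref{lem:im_delta_hzero}. There are three steps: the invertibility of $L_0$, the decomposition~\eqref{eq:hodge_decomp}, and the identification of $\PiH$.

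\emph{Step 1: $L_0$ is an isomorphism.} First, $L_G=\delta d$ maps $\Hzero$ into $\Hzero$, since $\imop\delta\subset\Hzero$ by Lemma~\ref{lem:im_delta_hzero}. Because $\Hzero$ is finite-dimensional, it suffices to show that $L_0$ is injective. Suppose $u\in\Hzero$ and $L_0u=0$. Then
\[
0=\langle \delta d u,u\rangle_{C^0}=\|du\|_{C^1}^2 ,
\]
so $u(e_+)=u(e_-)$ for every edge. Since $G$ is connected, $u$ is constant, and the mean-zero condition forces $u=0$. This connectedness step is the only place where any graph structure enters. It is also the only point needing care, so I expect no genuine obstacle here.

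\emph{Step 2: the decomposition.} Orthogonality comes directly from the adjoint identity: for $z\in\ker\delta$,
\[
\langle z,du\rangle_{C^1}=\langle\delta z,u\rangle_{C^0}=0 .
\]
Hence $\ker\delta\perp dC^0(G)$, and in particular the two subspaces intersect trivially. For spanning, take $\omega\in C^1(G)$. Since $\delta\omega\in\Hzero$, we may set $u:=L_0^{-1}\delta\omega$ and write $\omega=(\omega-du)+du$. Then
\[
\delta(\omega-du)=\delta\omega-L_0u=0,
\]
so $\omega-du\in\ker\delta$, which gives~\eqref{eq:hodge_decomp}. An alternative route is the standard identity $\ker\delta=(\imop d)^\perp$ in finite dimensions, but the explicit construction above already yields the formula needed in Step 3.

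\emph{Step 3: $\PiH$ is the orthogonal projector.} The construction in Step 2 shows that $\PiH\omega=\omega-dL_0^{-1}\delta\omega$ is precisely the $\ker\delta$-component of $\omega$ in the decomposition. Two checks confirm the projector properties:
\begin{itemize}[label={}]
\item $\PiH$ restricts to the identity on $\ker\delta$, since $\delta z=0$ gives $\PiH z=z$;
\item $\PiH$ annihilates $dC^0(G)$. Write $du=d(u-\bar u\one)$, where $u-\bar u\one\in\Hzero$. Then $L_0^{-1}\delta du=u-\bar u\one$, so $\PiH du=du-d(u-\bar u\one)=0$, because $d\one=0$.
\end{itemize}
Together with the orthogonality established in Step 2, this identifies $\PiH$ as the orthogonal projector onto $\ker\delta$ along $dC^0(G)$.
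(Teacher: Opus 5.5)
Your proof is correct and follows essentially the same route as the paper. Both arguments get positive definiteness of $L_0$ from $\langle L_0u,u\rangle=\|du\|_{C^1}^2$ and connectedness, then check that $I-dL_0^{-1}\delta$ lands in $\ker\delta$, fixes $\ker\delta$, and kills $dC^0(G)$ after centering $u$. Your explicit use of the adjoint identity to show $\ker\delta\perp dC^0(G)$ spells out the orthogonality that the paper leaves implicit.
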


\begin{proof}
Connectedness implies $\ker(d|_{\Hzero})=\{0\}$, so for $u\in\Hzero\setminus\{0\}$,
\begin{equation}
\langle L_0u,u\rangle=\langle\delta du,u\rangle=\|du\|_{C^1}^2>0.
\end{equation}
Thus $L_0$ is positive definite and hence invertible on $\Hzero$. Setting $P:=I-dL_0^{-1}\delta$, any $\omega\in C^1(G)$ obeys
\begin{equation}
\delta(P\omega)=\delta\omega-\delta dL_0^{-1}\delta\omega
=\delta\omega-L_0L_0^{-1}\delta\omega=0,
\end{equation}
so $P\omega\in\ker\delta$. For $\omega=du$, writing $u_0:=u-\bar u\,\one\in\Hzero$ gives $P(du)=du-dL_0^{-1}L_0u_0=0$. Hence $P$ is the identity on $\ker\delta$ and annihilates $dC^0(G)$; it is therefore the orthogonal projector claimed.
\end{proof}

As an immediate consequence, the image of $\delta$ exhausts the mean-zero slice.

\begin{corollary}
\label{cor:im_delta}
$\imop\delta=\Hzero$, and in particular $\operatorname{rank}\delta=|V|-1$.
\end{corollary}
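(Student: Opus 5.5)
Lemma~\ref{lem:im_delta_hzero} already gives the inclusion $\imop\delta\subset\Hzero$, so the task is to prove the reverse inclusion $\Hzero\subset\imop\delta$ and then read off the rank. I will obtain the reverse inclusion directly from the invertibility of $L_0$ in Lemma~\ref{lem:linear_hodge}, so no new estimate is required.

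\emph{Surjectivity onto $\Hzero$.} Fix $f\in\Hzero$. Since $L_0\colon\Hzero\to\Hzero$ is an isomorphism by Lemma~\ref{lem:linear_hodge}, there is $u:=L_0^{-1}f\in\Hzero$. Put $\omega:=du\in C^1(G)$. Then
\[
\delta\omega=\delta du=L_G u=L_0u=f,
\]
because $L_0$ is the restriction of $L_G=\delta d$ to $\Hzero$. Hence $f\in\imop\delta$, which gives $\Hzero\subset\imop\delta$. Combined with Lemma~\ref{lem:im_delta_hzero}, this yields $\imop\delta=\Hzero$.

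\emph{Rank.} The slice $\Hzero$ is the kernel of the nonzero linear functional $u\mapsto\sum_{v\in V}u_v$ on $\R^V$. It is therefore a hyperplane, of dimension $|V|-1$, and so $\operatorname{rank}\delta=\dim\imop\delta=|V|-1$.

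The only step that needs care is the inclusion $\Hzero\subset\imop\delta$. It rests on connectedness of $G$, which enters through the invertibility of $L_0$ established in Lemma~\ref{lem:linear_hodge}; once that lemma is taken as given, the argument is purely formal.
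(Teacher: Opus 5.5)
Your proof is correct and matches the paper's argument: you get the inclusion from Lemma~\ref{lem:im_delta_hzero}, the reverse inclusion by taking $u=L_0^{-1}f\in\Hzero$ so that $\delta du=L_0u=f$, and the rank from $\dim\Hzero=|V|-1$. You also justify that dimension count explicitly as the kernel of a nonzero linear functional, which the paper leaves implicit.
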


\begin{proof}
Lemma~\ref{lem:im_delta_hzero} gives the inclusion $\imop\delta\subset\Hzero$. Conversely, for any $y\in\Hzero$ the invertibility of $L_0$ supplies $u\in\Hzero$ with $\delta du=L_0u=y$, so $y\in\imop\delta$. The rank formula follows from $\dim\Hzero=|V|-1$.
\end{proof}

This completes the linear algebra. The first cohomology group $H^1(G;\R):=C^1(G)/dC^0(G)$ has dimension equal to the first Betti number
\begin{equation}
\beta_1(G):=|E|-|V|+1,
\end{equation}
and for $\omega\in C^1(G)$ we write $[\omega]:=\omega+dC^0(G)$ for its cohomology class. The linear Hodge projector $\PiH$ selects the unique coclosed representative of $[\omega]$; the rest of the section studies its nonlinear counterpart. In the discrete ledger of~\cite{PSTW2026Ledger}, the time-aggregated cycle-closure hypothesis (their Theorem~T3) restricts attention to flows lying entirely in the exact summand $dC^0(G)$, where the discrete Poincar\'e lemma reconstructs a scalar potential (their Theorem~T4); the Hodge decomposition $C^1(G)=\ker\delta\oplus dC^0(G)$ extends this picture to arbitrary cohomology classes by projecting onto the complementary coclosed summand, and $\PiH$ is precisely that projector. The remainder of this paper replaces the quadratic Dirichlet energy in each cohomology class by the cosh cost~$J$, and asks when the resulting nonlinear coclosed selector still coincides with $\PiH$.

\subsection{The nonlinear energy and the existence theorem}
\label{sec:nonlinear_energy}

Given an input edge field $h\in C^1(G)$, the $\cosh$-energy on the mean-zero slice is
\begin{equation}
\Phi_h(u):=\sum_{e\in E^\to}\bigl(\cosh(h(e)+(du)(e))-1\bigr),
\qquad u\in\Hzero.
\label{eq:sector_energy}
\end{equation}
Under the exponential change of variables $x_e(u):=\exp(h(e)+(du)(e))$ the edgewise summand equals the reciprocal cost $J(x_e)=\tfrac12(x_e+x_e^{-1})-1$ of~\cite{rs_cost}; $\Phi_h$ is thus the graph-level functional induced by $J$ on positive edge ratios. The exponential reformulation motivates the choice of potential but plays no role in the proofs, which we carry out entirely in the additive variables $z=h+du$.

The first variation of $\Phi_h$ is exactly the nonlinear coclosed equation announced in the introduction.

\begin{proposition}
\label{prop:stationarity}
For $h\in C^1(G)$ and $\phi\in\Hzero$, write $z=h+d\phi$. Then $D\Phi_h(\phi)[\eta]=0$ for all $\eta\in\Hzero$ if and only if $\delta\sinh z=0$.
\end{proposition}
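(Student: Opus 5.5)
The plan is to compute the directional derivative of $\Phi_h$ at $\phi$ directly, rewrite it with the adjoint identity, and then read off the stationarity condition using that the test directions range over $\Hzero$ and that $\delta\sinh z$ already lies in $\Hzero$.

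\emph{Step 1 (first variation).} Fix $\eta\in\Hzero$. Since $\cosh$ is smooth and $d$ is linear, differentiating $\epsilon\mapsto\Phi_h(\phi+\epsilon\eta)$ at $\epsilon=0$ edge by edge gives
\begin{equation}
D\Phi_h(\phi)[\eta]=\sum_{e\in E^\to}\sinh(z(e))\,(d\eta)(e)=\langle\sinh z,d\eta\rangle_{C^1},
\end{equation}
where $\sinh z$ is applied edgewise. The adjoint identity $\langle\omega,du\rangle_{C^1}=\langle\delta\omega,u\rangle_{C^0}$ then turns this into
\begin{equation}
D\Phi_h(\phi)[\eta]=\langle\delta\sinh z,\eta\rangle_{C^0}.
\end{equation}

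\emph{Step 2 (the ``if'' direction).} If $\delta\sinh z=0$, the last expression vanishes for every $\eta$, so in particular for every $\eta\in\Hzero$.

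\emph{Step 3 (the ``only if'' direction).} Here lies the only subtlety: the test functions are restricted to $\Hzero$, not all of $C^0(G)$. Vanishing of $\langle\delta\sinh z,\eta\rangle$ for all $\eta\in\Hzero$ says only that $\delta\sinh z\in\Hzero^{\perp}=\R\one$. By Lemma~\ref{lem:im_delta_hzero}, however, $\delta\sinh z\in\Hzero$. Since $\Hzero\cap\R\one=\{0\}$, we conclude $\delta\sinh z=0$. Alternatively, take $\eta:=\delta\sinh z$ itself, which is admissible by Lemma~\ref{lem:im_delta_hzero}, and obtain $\|\delta\sinh z\|_{C^0}^2=0$.

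No other obstacle is expected; the argument is a direct computation.
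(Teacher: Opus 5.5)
Your proposal is correct and follows essentially the same route as the paper: compute the first variation edgewise, convert it with the adjoint identity into $\langle\delta\sinh z,\eta\rangle_{C^0}$, and for the converse combine orthogonality to $\Hzero$ with Lemma~\ref{lem:im_delta_hzero}. Your alternative of testing directly with $\eta=\delta\sinh z\in\Hzero$ is an equally valid shortcut for that last step.
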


\begin{proof}
For every $\eta\in\Hzero$,
\begin{equation}
D\Phi_h(\phi)[\eta]
=\sum_{e\in E^\to}\sinh\bigl(h(e)+(d\phi)(e)\bigr)\,(d\eta)(e)
=\langle\delta\sinh z,\eta\rangle_{C^0}.
\end{equation}
If $\delta\sinh z=0$ the pairing vanishes for every $\eta$. Conversely, if the pairing vanishes for every $\eta\in\Hzero$, then $\delta\sinh z$ is orthogonal to $\Hzero$; since $\delta\sinh z\in\Hzero$ by Lemma~\ref{lem:im_delta_hzero}, this forces $\delta\sinh z=0$.
\end{proof}

The equation $\delta\sinh z=0$ is the nonlinear analogue of the classical coclosed condition $\delta z=0$, and the selector $\Picc$ of Section~\ref{sec:projection} is defined by picking out the unique representative of $[h]$ that satisfies it. To prove that such a representative exists and is unique in each class, we need two scalar inputs: a Poincar\'e inequality on $\Hzero$, and a strong-convexity bound for the scalar potential $\cosh-1$.

\begin{lemma}
\label{lem:poincare}
There exists a constant $c_G>0$, depending only on $G$, such that $\|du\|_{C^1}\ge c_G\|u\|_{C^0}$ for every $u\in\Hzero$.
\end{lemma}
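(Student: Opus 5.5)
The plan is to reuse Lemma~\ref{lem:linear_hodge}, which already contains the essential positivity. By that lemma, $L_0=\delta d|_{\Hzero}$ is a self-adjoint positive definite operator on the finite-dimensional space $\Hzero$. Self-adjointness follows from the adjoint identity: $\langle L_0u,v\rangle=\langle du,dv\rangle=\langle u,L_0v\rangle$ for $u,v\in\Hzero$, and $L_0$ maps $\Hzero$ into itself by Lemma~\ref{lem:im_delta_hzero}. Let $\lambda_1>0$ be its smallest eigenvalue. This is the algebraic connectivity of $G$, since the full spectrum of $L_G$ is $\{0\}$ together with the spectrum of $L_0$, and $\one$ spans $\ker L_G$. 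We then set $c_G:=\sqrt{\lambda_1}$.

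The inequality itself is the Rayleigh quotient bound. Fix $u\in\Hzero$. By the spectral theorem on $\Hzero$,
\[
\|du\|_{C^1}^2=\langle\delta du,u\rangle_{C^0}=\langle L_0u,u\rangle_{C^0}\ge\lambda_1\|u\|_{C^0}^2 ,
\]
and taking square roots gives $\|du\|_{C^1}\ge c_G\|u\|_{C^0}$. The constant depends only on $G$ through $L_0$. For this purpose the choice of orientation is irrelevant, since $L_G$ does not depend on it.

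An equivalent route avoids the spectral theorem and uses compactness instead. The map $u\mapsto\|du\|_{C^1}$ is continuous on the unit sphere of $\Hzero$, and that sphere is compact because the space is finite-dimensional. The map is nonvanishing there, because connectedness gives $\ker(d|_{\Hzero})=\{0\}$, as in the proof of Lemma~\ref{lem:linear_hodge}. Its minimum $c_G$ is therefore positive, and homogeneity extends the bound to all of $\Hzero$.

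There is no genuine obstacle here. The only point needing care is the injectivity of $d$ on $\Hzero$. If $du=0$, then $u$ is constant along every edge, hence constant on the connected graph $G$, and the mean-zero condition forces $u=0$. I would cite this step from Lemma~\ref{lem:linear_hodge} rather than reprove it. I might also remark that $c_G=\sqrt{\lambda_1(L_G)}$ is sharp, with equality attained at a Fiedler vector.
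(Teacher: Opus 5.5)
Your argument is correct, and your second, compactness-based route is exactly the paper's proof. The paper observes that $d|_{\Hzero}$ is injective, since a constant function on a connected graph with mean zero must vanish. It then states that an injective linear map on a finite-dimensional space is bounded below, which is your unit-sphere minimum plus homogeneity in compressed form.

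Your primary spectral route is a genuinely different packaging of the same input. It uses the same injectivity, which is what makes $L_0$ positive definite in Lemma~\ref{lem:linear_hodge}, and applies the spectral theorem to the self-adjoint operator $L_0$ on $\Hzero$. This identifies the optimal constant as $c_G=\sqrt{\lambda_1}$, the square root of the algebraic connectivity, attained at a Fiedler vector. There is no circularity, because Lemma~\ref{lem:linear_hodge} precedes and does not use the Poincar\'e inequality.

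The spectral route buys an explicit, sharp, orientation-independent constant expressed through a standard graph invariant. It costs a dependence on Lemma~\ref{lem:linear_hodge} and the spectral theorem. The paper's softer argument is sufficient for everything downstream, because $c_G$ is only used qualitatively: in the strict-convexity and coercivity steps of Theorem~\ref{thm:companion} and Proposition~\ref{prop:admissible_existence}. Your version is the one to reach for if quantitative coercivity estimates in terms of $G$ are ever needed.
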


\begin{proof}
The map $d|_{\Hzero}$ is injective: $du=0$ forces $u$ to be constant on the connected graph $G$, and the mean-zero condition then gives $u=0$. Injectivity of a linear map on a finite-dimensional space yields the stated norm bound.
\end{proof}

\begin{lemma}
\label{lem:strong_convexity}
For all $a,b\in\R$,
\begin{equation}
(\cosh a-1)-(\cosh b-1)-\sinh(b)(a-b)\ge \tfrac12(a-b)^2.
\label{eq:strong_convexity}
\end{equation}
\end{lemma}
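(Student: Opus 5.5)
The plan is to reduce the inequality to a one-variable statement and then use the fact that the second derivative of $\cosh$ is bounded below by $1$. Set
\[
f(a):=(\cosh a-1)-(\cosh b-1)-\sinh(b)(a-b)-\tfrac12(a-b)^2 ,
\]
with $b$ held fixed; the claim is that $f\ge 0$ on $\R$.

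First, $f(b)=0$ and $f'(a)=\sinh a-\sinh b-(a-b)$, so $f'(b)=0$. Next, $f''(a)=\cosh a-1\ge 0$ for every $a$, so $f$ is convex. A convex $C^1$ function with a critical point at $b$ attains its global minimum there, which gives $f(a)\ge f(b)=0$ for all $a$. This is exactly \eqref{eq:strong_convexity}.

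Equivalently, one can apply Taylor's theorem with integral remainder:
\[
(\cosh a-1)-(\cosh b-1)-\sinh(b)(a-b)=\int_b^a (a-s)\cosh s\,ds .
\]
Since $\cosh s\ge 1$ and $(a-s)$ has the same sign as $(a-b)$ throughout the range of integration, the integral is at least $\int_b^a(a-s)\,ds=\tfrac12(a-b)^2$, regardless of the order of $a$ and $b$.

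No step here is a real obstacle. The only point needing care is the sign bookkeeping when $a<b$, and the convexity argument avoids it altogether, so I would present that version.
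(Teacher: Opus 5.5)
Your proof is correct and essentially the same as the paper's: your $f(a)$ is exactly $g(a)-g(b)-g'(b)(a-b)$ for the paper's convex function $g(t)=\cosh t-1-\tfrac12 t^2$. So both arguments conclude from $g''=\cosh-1\ge 0$ that this tangent-line gap is nonnegative.
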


\begin{proof}
The function $g(t):=\cosh t-1-\tfrac12 t^2$ has $g''(t)=\cosh t-1\ge 0$, so $g$ is convex. Consequently $g(a)\ge g(b)+g'(b)(a-b)$, and expanding $g$ and $g'$ yields~\eqref{eq:strong_convexity}.
\end{proof}

These ingredients combine with Proposition~\ref{prop:stationarity} to give the central existence theorem of the section.

\begin{theorem}
\label{thm:companion}
For every $h\in C^1(G)$, the functional $\Phi_h$ has a unique minimizer $u_h\in \Hzero$. The associated edge field
\begin{equation}
\omega_h:=h+du_h
\end{equation}
is the unique representative of the cohomology class $[h]$ satisfying
\begin{equation}
\delta\sinh \omega_h=0.
\label{eq:coclosed_eq}
\end{equation}
Moreover, for every $u\in \Hzero$,
\begin{equation}
\Phi_h(u)-\Phi_h(u_h)
\ge \frac12\sum_{e\in E^\to}\bigl(d(u-u_h)\bigr)(e)^2.
\label{eq:quadratic_gap}
\end{equation}
\end{theorem}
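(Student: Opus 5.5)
The plan is to prove existence through coercivity, and then obtain uniqueness and the gap inequality~\eqref{eq:quadratic_gap} together from Lemma~\ref{lem:strong_convexity} combined with the first-order condition.

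\textbf{Existence.} The map $\Phi_h$ is continuous on the finite-dimensional space $\Hzero$. For coercivity I will use $\cosh t-1\ge \tfrac12 t^2$. This gives
\[
\Phi_h(u)\ge \tfrac12\|h+du\|_{C^1}^2\ge \tfrac12\bigl(\|du\|_{C^1}-\|h\|_{C^1}\bigr)^2
\]
whenever $\|du\|_{C^1}\ge\|h\|_{C^1}$. By Lemma~\ref{lem:poincare}, $\|du\|_{C^1}\ge c_G\|u\|_{C^0}$, so $\Phi_h(u)\to\infty$ as $\|u\|_{C^0}\to\infty$. The sublevel set $\{\Phi_h\le\Phi_h(0)\}$ is therefore closed and bounded, hence compact, and a minimizer $u_h$ exists. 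Since $\Phi_h$ is differentiable, $D\Phi_h(u_h)=0$ on $\Hzero$, and Proposition~\ref{prop:stationarity} then gives $\delta\sinh\omega_h=0$.

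\textbf{Gap inequality.} Fix $u\in\Hzero$ and apply Lemma~\ref{lem:strong_convexity} edgewise with $a=h(e)+(du)(e)$ and $b=\omega_h(e)$, so that $a-b=(d(u-u_h))(e)$. Summing over $e\in E^\to$ gives
\[
\Phi_h(u)-\Phi_h(u_h)-\bigl\langle \sinh\omega_h,\,d(u-u_h)\bigr\rangle_{C^1}\ge\tfrac12\|d(u-u_h)\|_{C^1}^2 .
\]
By the adjoint identity, the middle term equals $\langle\delta\sinh\omega_h,u-u_h\rangle_{C^0}=0$, which yields~\eqref{eq:quadratic_gap}. This is the one step that needs care: the first-order term must be killed using the stationarity equation itself, not merely minimality. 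It also implies uniqueness of the minimizer. If $\Phi_h(u)=\Phi_h(u_h)$, then $d(u-u_h)=0$, and since $u-u_h\in\Hzero$, Lemma~\ref{lem:poincare} forces $u=u_h$.

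\textbf{Uniqueness of the coclosed representative.} Suppose $z\in[h]$ also satisfies $\delta\sinh z=0$. Write $z=h+dv$ with $v\in C^0(G)$, and replace $v$ by $v-\bar v\,\one\in\Hzero$; this does not change $dv$. The gap argument above used only that $\omega_h=h+du_h$ with $u_h\in\Hzero$ and $\delta\sinh\omega_h=0$, not that $u_h$ is a minimizer. Rerunning it with $v$ in place of $u_h$ therefore shows that $v$ is a minimizer of $\Phi_h$. By the uniqueness just proved, $v=u_h$, so $z=\omega_h$.

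I do not expect a genuine obstacle. The only point to get right is ordering the argument so that uniqueness of the coclosed representative is read off from the gap inequality applied at an arbitrary stationary point.
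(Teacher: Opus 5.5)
Your proof is correct and uses the same ingredients as the paper: coercivity from $\cosh t-1\ge\tfrac12 t^2$ and Lemma~\ref{lem:poincare}, stationarity via Proposition~\ref{prop:stationarity}, and the gap inequality from Lemma~\ref{lem:strong_convexity} together with the adjoint identity. The only difference is how uniqueness is obtained. The paper gets both uniqueness claims from strict convexity via the Hessian bound $D^2\Phi_h(u)[\eta,\eta]\ge\|d\eta\|_{C^1}^2$, whereas you read them off from the gap inequality at an arbitrary stationary point; this avoids the second-derivative computation and makes explicit the ``critical point of a convex function is the minimizer'' step that the paper leaves implicit.
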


\begin{proof}
\emph{Strict convexity.} For $u,\eta\in\Hzero$,
\begin{equation}
D^2\Phi_h(u)[\eta,\eta]
=\sum_{e\in E^\to}\cosh\bigl(h(e)+(du)(e)\bigr)\,(d\eta)(e)^2
\ge \|d\eta\|_{C^1}^2,
\end{equation}
which by Lemma~\ref{lem:poincare} vanishes only for $\eta=0$. Hence $\Phi_h$ is strictly convex on $\Hzero$.

\emph{Coercivity.} The pointwise bound $\cosh t-1\ge\tfrac12 t^2$ gives
\begin{equation}
\Phi_h(u)
\ge \tfrac12\|h+du\|_{C^1}^2
\ge \tfrac12\bigl(c_G\|u\|_{C^0}-\|h\|_{C^1}\bigr)^2,
\end{equation}
so $\Phi_h(u)\to\infty$ as $\|u\|_{C^0}\to\infty$ in $\Hzero$. Being continuous on a finite-dimensional space, $\Phi_h$ attains its infimum; strict convexity then forces the minimizer to be unique.

\emph{Euler--Lagrange characterization.} By Proposition~\ref{prop:stationarity}, the minimizer $u_h$ satisfies~\eqref{eq:coclosed_eq}. Conversely, any $z\in[h]$ with $\delta\sinh z=0$ can be written $z=h+du$ after normalizing $u$ to lie in $\Hzero$, at which point Proposition~\ref{prop:stationarity} says $u$ is a critical point of $\Phi_h$; strict convexity forces $u=u_h$.

\emph{Quadratic gap.} Applying Lemma~\ref{lem:strong_convexity} edgewise with $a_e=h(e)+(du)(e)$ and $b_e=\omega_h(e)$ and summing over $e\in E^\to$,
\begin{equation}
\Phi_h(u)-\Phi_h(u_h)
\ge
\sum_{e}\sinh(\omega_h(e))\,d(u-u_h)(e)
+\tfrac12\sum_{e}\bigl(d(u-u_h)\bigr)(e)^2.
\end{equation}
The first sum equals $\langle\sinh\omega_h,d(u-u_h)\rangle_{C^1}=\langle\delta\sinh\omega_h,u-u_h\rangle_{C^0}=0$ by the adjoint identity and~\eqref{eq:coclosed_eq}, leaving exactly~\eqref{eq:quadratic_gap}.
\end{proof}

Theorem~\ref{thm:companion} is the variational backbone of the paper: every cohomology class contains a unique nonlinear coclosed representative, and the $\cosh$-energy distinguishes it from every other representative by a clean quadratic lower bound that sharpens uniqueness into a quantitative stability statement.

\subsection{Self-concordance and a certified Newton method}
\label{sec:newton}

Theorem~\ref{thm:companion} tells us the minimizer $u_h$ exists and is unique but gives no recipe for computing it. We close the section by showing that the pointwise separable structure of $\Phi_h$---a sum of identical one-dimensional convex functions, each composed with a linear edgewise map---produces a sharp self-concordance bound, which in turn yields a damped/full Newton method with an explicit iteration certificate. The argument follows the standard framework of Nesterov and Nemirovski~\cite{nesterov_nemirovski,nesterov,boyd_vandenberghe2004}.

\begin{theorem}
\label{thm:self_concordance}
For every $h\in C^1(G)$ and every $\phi,\eta\in \Hzero$,
\begin{equation}
\bigl|D^3\Phi_h(\phi)[\eta,\eta,\eta]\bigr|
\le
\bigl(D^2\Phi_h(\phi)[\eta,\eta]\bigr)^{3/2}.
\label{eq:self_concordance}
\end{equation}
In particular, $\Phi_h$ is standard self-concordant.
\end{theorem}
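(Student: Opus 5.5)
We compute the second and third derivatives of $\Phi_h$ edgewise and then reduce the claim to an elementary inequality between two sums. Write $z=h+d\phi$ and $x_e:=(d\eta)(e)$. Differentiating the separable sum $\Phi_h(\phi)=\sum_e(\cosh z_e-1)$ along $\eta$ gives
\begin{equation*}
D^2\Phi_h(\phi)[\eta,\eta]=\sum_{e}\cosh(z_e)\,x_e^2,
\qquad
D^3\Phi_h(\phi)[\eta,\eta,\eta]=\sum_{e}\sinh(z_e)\,x_e^3 .
\end{equation*}
The second formula is the one already used in the strict-convexity step of Theorem~\ref{thm:companion}. The third follows by differentiating $\cosh(z_e+t x_e)$ once more in $t$.

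\emph{Edgewise bound.} Since $|\sinh t|\le\cosh t$ for all real $t$, we get
\begin{equation*}
\bigl|D^3\Phi_h(\phi)[\eta,\eta,\eta]\bigr|\le\sum_e\cosh(z_e)\,|x_e|^3 .
\end{equation*}
Set $a_e:=\cosh(z_e)\,x_e^2\ge0$. Then $\cosh(z_e)|x_e|^3=a_e^{3/2}/\cosh(z_e)^{1/2}\le a_e^{3/2}$, because $\cosh z_e\ge1$. This is the one point where the specific potential matters: we need both $|\psi'''|\le\psi''$ and $\psi''\ge1$.

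\emph{Summation.} For nonnegative reals, the $\ell^{3/2}$ norm is dominated by the $\ell^1$ norm, so $\sum_e a_e^{3/2}\le\bigl(\sum_e a_e\bigr)^{3/2}$. This follows from $a_e^{1/2}\le(\sum_f a_f)^{1/2}$ applied termwise. Chaining the inequalities yields \eqref{eq:self_concordance}.

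Standard self-concordance in the sense of Nesterov--Nemirovski requires $|D^3f[\eta]^3|\le 2(D^2f[\eta]^2)^{3/2}$. Our bound has constant $1\le2$, so it implies this, and $\Phi_h$ is also closed and convex as a finite-valued smooth convex function. The only point needing care is the summation step. A naive Cauchy--Schwarz estimate would introduce a factor depending on $|E|$, whereas the termwise $\ell^{3/2}\le\ell^1$ comparison gives a constant that does not depend on $G$. I do not expect any genuine obstacle.
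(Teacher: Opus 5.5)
Your proof is correct and matches the paper's argument step for step. You use the same edgewise formulas for $D^2\Phi_h$ and $D^3\Phi_h$, the same pointwise bound $|\sinh(z_e)|\,|x_e|^3\le a_e^{3/2}$ (your use of $\cosh z_e\ge 1$ is the paper's $\cosh\le\cosh^{3/2}$), and the same termwise estimate $a_e^{3/2}\le a_e\bigl(\sum_f a_f\bigr)^{1/2}$, which gives the constant $1\le 2$.
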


\begin{proof}
Write $c_e:=h(e)+(d\phi)(e)$ and $v_e:=(d\eta)(e)$, so that
\begin{equation}
D^2\Phi_h(\phi)[\eta,\eta]=\sum_{e\in E^\to}\cosh(c_e)\,v_e^2,
\qquad
D^3\Phi_h(\phi)[\eta,\eta,\eta]=\sum_{e\in E^\to}\sinh(c_e)\,v_e^3.
\end{equation}
Set $a_e:=\cosh(c_e)\,v_e^2\ge 0$. Since $|\sinh(c_e)|\le\cosh(c_e)\le\cosh(c_e)^{3/2}$,
\begin{equation}
|\sinh(c_e)|\,|v_e|^3\le a_e^{3/2}.
\end{equation}
For any finite sequence of nonnegative numbers one has $a_e^{3/2}\le a_e(\sum_j a_j)^{1/2}$; summing over $e$ gives $\sum_e a_e^{3/2}\le(\sum_e a_e)^{3/2}$, and combining these bounds yields~\eqref{eq:self_concordance}. This is the standard self-concordance inequality with constant $1\le 2$, so $\Phi_h$ is standard self-concordant in the sense of~\cite{nesterov_nemirovski}.
\end{proof}

This bound makes the textbook Newton theory applicable to $\Phi_h$. Writing $g_k:=\nabla\Phi_h(\phi^{(k)})$ and $H_k:=D^2\Phi_h(\phi^{(k)})$ (both on $\Hzero$), the \emph{Newton decrement} at iterate $k$ is
\begin{equation}
\lambda_k:=\bigl(g_k^{\top}H_k^{-1}g_k\bigr)^{1/2}.
\end{equation}
The standard damped/full Newton method, adapted to self-concordant functions, alternates between a globalizing phase in which $\lambda_k$ is large and a local phase in which full Newton steps converge quadratically.

\begin{theorem}
\label{thm:newton}
Fix $h\in C^1(G)$ and $\phi^{(0)}\in\Hzero$, and iterate
\begin{equation}
\phi^{(k+1)}=
\begin{cases}
\phi^{(k)}-\dfrac{1}{1+\lambda_k}H_k^{-1}g_k, & \lambda_k\ge \dfrac14,\\[1.2ex]
\phi^{(k)}-H_k^{-1}g_k, & \lambda_k<\dfrac14.
\end{cases}
\label{eq:damped_newton}
\end{equation}
Then
\begin{enumerate}[label=\textup{(\alph*)}]
\item the number of damped steps before $\lambda_k<1/4$ is at most
\begin{equation}
\left\lceil\frac{\Phi_h(\phi^{(0)})-\inf_{\Hzero}\Phi_h}{\theta_\star}\right\rceil,
\qquad
\theta_\star:=\tfrac14-\log\tfrac54;
\end{equation}
\item once $\lambda_k<1/4$, the full Newton phase converges quadratically, so the number of additional iterations needed to reach $\lambda_k<\varepsilon$ is $O(\log\log(1/\varepsilon))$.
\end{enumerate}
\end{theorem}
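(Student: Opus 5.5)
The plan is to run the standard Nesterov--Nemirovski analysis of damped and full Newton steps for standard self-concordant functions, using Theorem~\ref{thm:self_concordance} as the sole structural input. All computations take place on the finite-dimensional space $\Hzero$. By the proof of Theorem~\ref{thm:companion}, $H_k$ is positive definite on $\Hzero$, since $D^2\Phi_h(\phi)[\eta,\eta]\ge\|d\eta\|_{C^1}^2\ge c_G^2\|\eta\|_{C^0}^2$. Hence $H_k^{-1}g_k$ and $\lambda_k$ are well defined, and $\lambda_k=0$ exactly at the minimizer $u_h$. I will write $\|\eta\|_\phi:=(D^2\Phi_h(\phi)[\eta,\eta])^{1/2}$ for the local norm. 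Note that inequality~\eqref{eq:self_concordance} holds with constant $1$, which is stronger than the standard constant $2$, so the textbook constants apply a fortiori.

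\emph{Step 1: Hessian comparison.} I would first derive the standard consequence of self-concordance: for $\|y-\phi\|_\phi<1$,
\[
(1-\|y-\phi\|_\phi)^2 H(\phi)\preceq H(y)\preceq (1-\|y-\phi\|_\phi)^{-2}H(\phi).
\]
This is obtained by restricting to the segment and integrating the differential inequality $|\tfrac{d}{dt}q(t)^{-1/2}|\le 1$ for $q(t)=D^2\Phi_h(\phi+t\eta)[\eta,\eta]$. Integrating twice more gives the upper bound
\[
\Phi_h(y)\le\Phi_h(\phi)+\langle g,y-\phi\rangle+\omega_*(\|y-\phi\|_\phi),
\qquad \omega_*(t)=-t-\log(1-t).
\]

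\emph{Step 2: damped phase, part (a).} Take the step $y=\phi-\tfrac{1}{1+\lambda}H^{-1}g$. Its local norm is $\lambda/(1+\lambda)<1$, and $\langle g,y-\phi\rangle=-\lambda^2/(1+\lambda)$. Substituting into the Step~1 upper bound yields the decrease
\[
\Phi_h(\phi)-\Phi_h(y)\ge \lambda-\log(1+\lambda)=:\omega(\lambda).
\]
Since $\omega$ is increasing, every damped step (where $\lambda_k\ge\tfrac14$) decreases $\Phi_h$ by at least $\omega(1/4)=\tfrac14-\log\tfrac54=\theta_\star$. The total decrease is bounded by $\Phi_h(\phi^{(0)})-\inf_{\Hzero}\Phi_h$, which is finite by Theorem~\ref{thm:companion}. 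Therefore the number of damped steps is at most the stated ceiling.

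\emph{Step 3: full phase, part (b).} For the full step $y=\phi-H^{-1}g$ with $\lambda<1$, I would use the Step~1 Hessian bounds together with the integral representation
\[
\nabla\Phi_h(y)=\int_0^1\bigl(H(\phi+t(y-\phi))-H(\phi)\bigr)(y-\phi)\,dt.
\]
This gives the classical estimate $\lambda(y)\le\bigl(\lambda/(1-\lambda)\bigr)^2$. For $\lambda<\tfrac14$ this yields $\lambda(y)\le\tfrac{16}{9}\lambda^2<\tfrac19<\tfrac14$. So the full-step regime is invariant, and $\lambda_{k+1}\le C\lambda_k^2$ with $C=16/9$ and $C\lambda_k<1$. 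Iterating gives $C\lambda_{k+m}\le (C\lambda_k)^{2^m}\le(4/9)^{2^m}$, and hence reaching $\lambda<\varepsilon$ takes $O(\log\log(1/\varepsilon))$ steps.

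The main obstacle is Step~3. The bound $\lambda(y)\le(\lambda/(1-\lambda))^2$ requires carefully transporting the decrement from the local norm at $\phi$ to the local norm at $y$ using both sides of the Hessian comparison. Checking that the threshold $1/4$ makes the full phase self-sustaining also hinges on these constants. I would follow Nesterov's \emph{Introductory Lectures}, Theorem~4.1.14, essentially verbatim, rather than re-deriving them.
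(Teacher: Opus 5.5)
Your proposal is correct and takes the same route as the paper, which likewise just invokes the Nesterov--Nemirovski analysis. You supply the textbook details the paper omits: the per-step decrease $\lambda_k-\log(1+\lambda_k)\ge\theta_\star$ summed against $\Phi_h(\phi^{(0)})-\inf_{\Hzero}\Phi_h$, and the bound $\lambda_{k+1}\le(\lambda_k/(1-\lambda_k))^2<1/9$, which keeps the full phase self-sustaining. One small precision: integrating $q(t)^{-1/2}$ along the segment only controls the Hessian in the direction $y-\phi$, which suffices for Step~2, whereas the operator comparison used in Step~3 also needs the mixed bound $|D^3\Phi_h(\phi)[\eta,u,u]|\le\|\eta\|_\phi\|u\|_\phi^2$; here that bound follows at once from separability and $|\sinh c|\le\cosh^{3/2}c$.
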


\begin{proof}
Self-concordance (Theorem~\ref{thm:self_concordance}) together with strict convexity of $\Phi_h$ on $\Hzero$ let us invoke the classical Nesterov--Nemirovski machinery~\cite{nesterov,nesterov_nemirovski}. Each damped step decreases the objective by at least $\theta(\lambda_k):=\lambda_k-\log(1+\lambda_k)\ge\theta_\star$ whenever $\lambda_k\ge 1/4$, which gives (a) after summing over damped steps. Once $\lambda_k<1/4$, the full-step local quadratic-convergence theorem for self-concordant functions~\cite{nesterov} yields (b).
\end{proof}

Theorems~\ref{thm:companion}, \ref{thm:self_concordance}, and~\ref{thm:newton} together establish the variational, analytic, and computational core of the paper: every cohomology class has a unique nonlinear coclosed representative, this representative is separated energetically from the rest of its class, and it can be computed by a globally convergent Newton scheme with an explicit iteration bound. The next section turns this pointwise existence theorem into a globally defined nonlinear selector and studies its structure.

\section{The nonlinear coclosed projection}
\label{sec:projection}

Theorem~\ref{thm:companion} assigns to each input edge field $\omega\in C^1(G)$ a distinguished representative of its cohomology class, namely the unique nonlinear coclosed one. Packaged as a function of $\omega$, this assignment defines a map
\begin{equation}
\Picc\colon C^1(G)\longrightarrow C^1(G),
\label{eq:picc_global_map}
\end{equation}
the nonlinear analogue of the Hodge projector $\PiH$. This section studies its structure. After recording the elementary invariance properties, we identify the image of $\Picc$ as a smooth embedded submanifold of $C^1(G)$ diffeomorphic to $\ker\delta$ through the edgewise $\operatorname{arsinh}$ map, and then compute the differential of $\Picc$ as a \emph{weighted} Hodge projector whose edge weights are determined by the output. The section closes by upgrading smoothness to real analyticity and specializing the differential formula at the origin, where it reduces exactly to $\PiH$.

\subsection{Definition, basic properties, and the image}
\label{sec:picc_def_image}

\begin{definition}
\label{def:picc}
For $\omega\in C^1(G)$, let
\begin{equation}
\phi^*(\omega):=\arg\min_{\phi\in\Hzero}\Phi_\omega(\phi),
\qquad
\Picc(\omega):=\omega+d\phi^*(\omega).
\label{eq:picc_def}
\end{equation}
\end{definition}

Three elementary properties of $\Picc$ follow directly from Theorem~\ref{thm:companion}: it is the nonlinear coclosed selector of its cohomology class, it respects exact shifts, and it is idempotent.

\begin{proposition}
\label{prop:basic_props}
For every $\omega\in C^1(G)$, the field $\Picc(\omega)$ is the unique element of $[\omega]$ satisfying $\delta\sinh\Picc(\omega)=0$. Moreover, $\Picc(\omega+d\chi)=\Picc(\omega)$ for every $\chi\in C^0(G)$, and $\Picc\circ\Picc=\Picc$.
\end{proposition}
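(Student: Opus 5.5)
All three claims follow from Theorem~\ref{thm:companion} together with one structural observation: $\Phi_\omega$ and the class $[\omega]$ depend on $\omega$ only through its cohomology class, up to a relabelling of the variable $\phi$. I will take the three assertions in turn.

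\emph{First assertion.} Definition~\ref{def:picc} gives $\phi^*(\omega)=u_\omega$, the unique minimizer supplied by Theorem~\ref{thm:companion} with $h=\omega$. Hence $\Picc(\omega)=\omega+du_\omega=\omega_\omega$. The same theorem says this field is the unique element of $[\omega]$ with $\delta\sinh\omega_\omega=0$, which is exactly the first claim.

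\emph{Shift invariance.} Let $\chi\in C^0(G)$ and set $\omega'=\omega+d\chi$. Then $[\omega']=[\omega]$ as affine subsets of $C^1(G)$, because $d\chi\in dC^0(G)$. The field $\Picc(\omega')$ is the unique element of $[\omega']=[\omega]$ satisfying $\delta\sinh z=0$, and so is $\Picc(\omega)$. Uniqueness forces $\Picc(\omega')=\Picc(\omega)$.

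If one wants this at the level of minimizers, it can be checked directly. Put $\chi_0:=\chi-\bar\chi\,\one\in\Hzero$, so that $d\chi=d\chi_0$. Then $\Phi_{\omega'}(\phi)=\Phi_\omega(\phi+\chi_0)$, which gives $\phi^*(\omega')=\phi^*(\omega)-\chi_0$ and hence $\omega'+d\phi^*(\omega')=\omega+d\phi^*(\omega)$. The only point to watch is that $\chi$ need not lie in $\Hzero$; the mean-zero normalization handles this, using $d\one=0$.

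\emph{Idempotence.} Set $z=\Picc(\omega)$. Then $z\in[z]$ and $\delta\sinh z=0$. By the first assertion applied to $z$, $\Picc(z)$ is the unique element of $[z]$ with this property, so $\Picc(z)=z$.

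Idempotence also follows from shift invariance. Since $z=\omega+d\phi^*(\omega)$, the previous part gives $\Picc(z)=\Picc(\omega)=z$.

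I expect no genuine obstacle: every step reduces to the uniqueness clause in Theorem~\ref{thm:companion}. The one detail to state explicitly is the equality of affine classes $[\omega+d\chi]=[\omega]$, which rests on $d(\chi-\bar\chi\one)=d\chi$.
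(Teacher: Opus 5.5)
Your proposal is correct and follows essentially the same route as the paper: all three claims reduce to the uniqueness clause of Theorem~\ref{thm:companion}, with shift invariance coming from $[\omega+d\chi]=[\omega]$ and idempotence from $\Picc(\omega)$ already satisfying the nonlinear coclosed equation. Your extra minimizer-level check via $\chi_0=\chi-\bar\chi\,\one$ is a correct but optional elaboration.
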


\begin{proof}
The existence and uniqueness statement is Theorem~\ref{thm:companion} with $h=\omega$. Exact-shift invariance is immediate from the fact that $\omega$ and $\omega+d\chi$ define the same cohomology class. Idempotence follows because $\Picc(\omega)$ already satisfies the nonlinear coclosed condition and therefore is its own coclosed representative.
\end{proof}

Proposition~\ref{prop:basic_props} shows that $\Picc$ takes values in the \emph{nonlinear coclosed slice}
\begin{equation}
\Mcc:=\{z\in C^1(G):\delta\sinh z=0\},
\label{eq:Mcc_def}
\end{equation}
the global object of study for the rest of this section. The next result models $\Mcc$ as the image of $\ker\delta$ under the edgewise $\operatorname{arsinh}$.

\begin{proposition}
\label{prop:global_slice}
The edgewise $\operatorname{arsinh}$ map restricts to a smooth diffeomorphism
\begin{equation}
\operatorname{arsinh}\colon\ker\delta\xrightarrow{\sim}\Mcc,
\label{eq:arsinh_diffeo}
\end{equation}
and hence $\Mcc$ is a smooth embedded submanifold of $C^1(G)$ of dimension $\beta_1(G)$.
\end{proposition}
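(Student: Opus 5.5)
The plan is to work with the edgewise maps $S:=\sinh\colon C^1(G)\to C^1(G)$ and $A:=\operatorname{arsinh}\colon C^1(G)\to C^1(G)$, each applied coordinatewise. Since $\sinh\colon\R\to\R$ is a real-analytic bijection with derivative $\cosh t\ge 1>0$, its inverse $\operatorname{arsinh}$ is real analytic as well. Applying this in each coordinate, $S$ and $A$ are mutually inverse smooth (indeed analytic) diffeomorphisms of $C^1(G)$, with diagonal Jacobians $DS(z)=\operatorname{diag}(\cosh z_e)$ and $DA(y)=\operatorname{diag}\bigl((1+y_e^2)^{-1/2}\bigr)$.

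The key observation is set-theoretic. By definition~\eqref{eq:Mcc_def}, $z\in\Mcc$ if and only if $\delta(Sz)=0$, that is, if and only if $Sz\in\ker\delta$. Hence $\Mcc=S^{-1}(\ker\delta)=A(\ker\delta)$. The restriction $A|_{\ker\delta}$ is therefore a bijection $\ker\delta\to\Mcc$, with inverse $S|_{\Mcc}$.

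For the manifold structure, I would use the fact that the image of a linear subspace under a global diffeomorphism of the ambient space is an embedded submanifold. Concretely, $S$ serves as a global chart of $C^1(G)$ that straightens $\Mcc$ onto the linear subspace $\ker\delta$, which is a slice chart at every point. Smoothness of $A|_{\ker\delta}$ and of its inverse $S|_{\Mcc}$ (the restriction of a smooth ambient map) then gives the diffeomorphism claim~\eqref{eq:arsinh_diffeo}. Alternatively, one can describe $\Mcc$ as the preimage $F^{-1}(0)$ of $F:=\delta\circ S$, viewed as a map into $\Hzero$. By Corollary~\ref{cor:im_delta} and invertibility of $DS(z)$, the differential $DF(z)=\delta\,\operatorname{diag}(\cosh z)$ is surjective onto $\Hzero$. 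I would mention this regular-value argument only as a cross-check.

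For the dimension count, Corollary~\ref{cor:im_delta} gives $\operatorname{rank}\delta=|V|-1$, so rank--nullity yields $\dim\ker\delta=|E|-|V|+1=\beta_1(G)$, and diffeomorphisms preserve dimension. There is no real obstacle here. The only point needing care is justifying that $\operatorname{arsinh}$ is globally smooth with smooth inverse, which amounts to checking that $\sinh'$ never vanishes so the inverse function theorem applies globally on $\R$. Everything else is bookkeeping with Corollary~\ref{cor:im_delta}.
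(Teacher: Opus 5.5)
Your proposal is correct and follows the paper's argument. You identify $\Mcc=\operatorname{arsinh}(\ker\delta)$ via $z\in\Mcc\iff\sinh z\in\ker\delta$, use that edgewise $\operatorname{arsinh}$ is a global diffeomorphism of $C^1(G)$ carrying the linear subspace $\ker\delta$ onto $\Mcc$, and read off the dimension from $\dim\ker\delta=\beta_1(G)$. Your slice-chart remark, the rank--nullity computation via Corollary~\ref{cor:im_delta}, and the regular-value cross-check only make explicit what the paper leaves implicit.
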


\begin{proof}
For $y\in\ker\delta$, the field $z:=\operatorname{arsinh}(y)$ satisfies $\delta\sinh z=\delta y=0$, so $z\in\Mcc$. Conversely, any $z\in\Mcc$ has $\sinh z\in\ker\delta$ and $z=\operatorname{arsinh}(\sinh z)$. Since $\sinh\colon\R\to\R$ is a smooth diffeomorphism, its edgewise inverse is a smooth diffeomorphism on $C^1(G)$; restricting to $\ker\delta$ gives~\eqref{eq:arsinh_diffeo}. The dimension assertion follows from $\dim\ker\delta=\beta_1(G)$.
\end{proof}

Combining the two propositions identifies the image of $\Picc$ with this submanifold.

\begin{corollary}
\label{cor:image_picc}
$\imop\Picc=\Mcc=\operatorname{arsinh}(\ker\delta)$.
\end{corollary}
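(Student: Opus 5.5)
The corollary asks for the equality $\imop\Picc=\Mcc$; the second equality $\Mcc=\operatorname{arsinh}(\ker\delta)$ is already Proposition~\ref{prop:global_slice}. I would prove the first by establishing the two inclusions separately.

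For $\imop\Picc\subset\Mcc$, take any $\omega\in C^1(G)$. Proposition~\ref{prop:basic_props} states that $\delta\sinh\Picc(\omega)=0$. This is exactly the defining condition~\eqref{eq:Mcc_def} of $\Mcc$, so $\Picc(\omega)\in\Mcc$.

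For the reverse inclusion $\Mcc\subset\imop\Picc$, take $z\in\Mcc$ and feed it into the selector as the input $\omega=z$. The field $z$ belongs to its own class $[z]$ and satisfies $\delta\sinh z=0$. By the uniqueness clause of Theorem~\ref{thm:companion}, equivalently the first assertion of Proposition~\ref{prop:basic_props}, $\Picc(z)$ is the only element of $[z]$ with this property. Hence $\Picc(z)=z$, and therefore $z\in\imop\Picc$. This step also shows that $\Picc$ restricts to the identity on $\Mcc$, which is consistent with idempotence.

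Combining the two inclusions gives $\imop\Picc=\Mcc$, and appending Proposition~\ref{prop:global_slice} gives $\Mcc=\operatorname{arsinh}(\ker\delta)$. I expect no real obstacle here. The one point to state explicitly is that uniqueness in Theorem~\ref{thm:companion} holds among all representatives of the class, not only among those written as $h+du$ with $u\in\Hzero$. This is immediate, because any exact shift $d\chi$ with $\chi\in C^0(G)$ can be renormalized to have $\chi$ of mean zero without changing $d\chi$.
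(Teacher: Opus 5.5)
Your proof is correct and follows the same two-inclusion argument as the paper, with Proposition~\ref{prop:global_slice} supplying the second equality. The one difference is in the step $\Mcc\subset\imop\Picc$: you justify $\Picc(z)=z$ directly from the uniqueness clause of Theorem~\ref{thm:companion}, whereas the paper cites idempotence, and your justification is the more precise one, since idempotence by itself only fixes points already known to lie in the image.
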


\begin{proof}
The inclusion $\imop\Picc\subset\Mcc$ is Proposition~\ref{prop:basic_props}. Conversely, any $z\in\Mcc$ already satisfies $\delta\sinh z=0$, hence $\Picc(z)=z$ by idempotence, and so $z\in\imop\Picc$. The identification $\Mcc=\operatorname{arsinh}(\ker\delta)$ is Proposition~\ref{prop:global_slice}.
\end{proof}

Corollary~\ref{cor:image_picc} is the structural analogue of the linear identity $\imop\PiH=\ker\delta$: the nonlinear selector lands on $\operatorname{arsinh}(\ker\delta)$, a smooth submanifold of dimension $\beta_1(G)$ passing through the origin tangent to $\ker\delta$. The next subsection makes this tangency infinitesimally precise.

\subsection{Weighted Hodge decomposition}
\label{sec:weighted_hodge}

The differential of $\Picc$ at a point $\omega$ turns out to be a weighted analogue of the Hodge projector $\PiH$, with edge weights $w_e=\cosh(\Picc(\omega)_e)$ dictated by the output. We pause to develop this weighted calculus for arbitrary positive weights; the specialization $w=\cosh\Picc(\omega)$ will then yield the differential formula in Section~\ref{sec:smoothness}.

Fix $w\in(0,\infty)^{E^\to}$ and define the weighted divergence and weighted Laplacian
\begin{equation}
\delta_w\xi:=\delta(w\odot\xi),
\qquad
L_w:=(\delta_w d)|_{\Hzero}\colon\Hzero\to\Hzero,
\label{eq:weighted_ops}
\end{equation}
together with the weighted edge inner product and norm
\begin{equation}
\langle\xi,\zeta\rangle_w:=\sum_{e\in E^\to}w_e\,\xi(e)\,\zeta(e),
\qquad
\|\xi\|_w:=\langle\xi,\xi\rangle_w^{1/2}.
\label{eq:weighted_inner}
\end{equation}
(We use the same notation $\|\cdot\|_w$ for the induced operator norm on $\operatorname{End}(C^1(G))$.) A direct extension of Lemma~\ref{lem:linear_hodge} applies.

\begin{lemma}
\label{lem:weighted_laplacian}
The operator $L_w$ is self-adjoint and positive definite on $\Hzero$, and hence an isomorphism.
\end{lemma}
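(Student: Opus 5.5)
The plan is to repeat the proof of Lemma~\ref{lem:linear_hodge} with the Euclidean edge pairing replaced by the weighted pairing $\langle\cdot,\cdot\rangle_w$. Two preliminary facts are needed. First, $L_w$ really maps $\Hzero$ into $\Hzero$. Indeed, for $u\in\Hzero$ we have $L_wu=\delta(w\odot du)$, and Lemma~\ref{lem:im_delta_hzero} places this in $\imop\delta\subset\Hzero$, so $L_w$ is an endomorphism of $\Hzero$. Second, the adjoint identity gives, for $u,v\in\Hzero$,
\begin{equation}
\langle L_wu,v\rangle_{C^0}=\langle w\odot du,dv\rangle_{C^1}=\sum_{e\in E^\to}w_e\,(du)(e)\,(dv)(e)=\langle du,dv\rangle_w .
\end{equation}

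Self-adjointness then follows at once. The right-hand side $\langle du,dv\rangle_w$ is symmetric in $u$ and $v$. Hence $L_w$ is self-adjoint with respect to the restriction of the standard $C^0$ inner product to $\Hzero$.

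For positive definiteness, set $v=u$ and let $w_{\min}:=\min_e w_e>0$, which is positive because $E^\to$ is finite and every weight is positive. This gives
\begin{equation}
\langle L_wu,u\rangle_{C^0}=\|du\|_w^2\ge w_{\min}\|du\|_{C^1}^2\ge w_{\min}c_G^2\|u\|_{C^0}^2
\end{equation}
for every $u\in\Hzero$, using the Poincar\'e inequality of Lemma~\ref{lem:poincare}. Alternatively, one can use only the injectivity of $d|_{\Hzero}$ coming from connectedness.

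A positive definite self-adjoint operator on the finite-dimensional space $\Hzero$ has trivial kernel, so it is an isomorphism. There is no genuine obstacle in this argument. The only point needing care is the first step, checking that $L_w$ preserves $\Hzero$, so that the statement concerns an endomorphism of $\Hzero$ and not merely a map into $C^0(G)$.
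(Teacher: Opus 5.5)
Your proof is correct and follows the paper's argument: the adjoint identity gives $\langle L_wu,v\rangle=\langle w\odot du,dv\rangle$, which is symmetric in $u,v$, and its diagonal vanishes only when $du=0$, which forces $u=0$ in $\Hzero$ by connectedness. Your explicit check that $L_w$ maps $\Hzero$ into itself, which the paper leaves to the definition of $L_w$, and your quantitative bound $\langle L_wu,u\rangle\ge w_{\min}c_G^2\|u\|_{C^0}^2$ are harmless refinements of the paper's qualitative injectivity argument.
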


\begin{proof}
For $u,v\in\Hzero$,
\begin{equation}
\langle L_wu,v\rangle=\langle w\odot du,dv\rangle=\langle u,L_wv\rangle,
\end{equation}
and the diagonal gives $\langle L_wu,u\rangle=\sum_{e\in E^\to}w_e(du)_e^2\ge 0$. Vanishing of the right-hand side forces $du=0$, which by connectedness and the mean-zero constraint yields $u=0$. Hence $L_w$ is positive definite on the finite-dimensional space $\Hzero$, and therefore invertible.
\end{proof}

\begin{lemma}
\label{lem:weighted_hodge_decomposition}
For every $w\in(0,\infty)^{E^\to}$,
\begin{equation}
C^1(G)=\ker\delta_w\oplus dC^0(G),
\label{eq:weighted_hodge_decomp}
\end{equation}
with the summands orthogonal relative to $\langle\cdot,\cdot\rangle_w$. The operator $P_w:=I-dL_w^{-1}\delta_w$ is the weighted orthogonal projector onto $\ker\delta_w$ along $dC^0(G)$.
\end{lemma}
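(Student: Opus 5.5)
The plan is to mirror the proof of Lemma~\ref{lem:linear_hodge}, with $L_w$ in place of $L_0$ and $\langle\cdot,\cdot\rangle_w$ in place of the Euclidean pairing. First we check that $P_w$ is well defined. Since $\delta_w\xi=\delta(w\odot\xi)$, Lemma~\ref{lem:im_delta_hzero} gives $\delta_w\xi\in\Hzero$ for every $\xi\in C^1(G)$. By Lemma~\ref{lem:weighted_laplacian}, $L_w^{-1}\colon\Hzero\to\Hzero$ exists, so $P_w=I-dL_w^{-1}\delta_w$ makes sense on all of $C^1(G)$.

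Next we verify the two defining properties of a projector onto $\ker\delta_w$ along $dC^0(G)$.
\begin{itemize}
\item[(i)] For any $\xi$, we have $\delta_w(P_w\xi)=\delta_w\xi-\delta_w d\,L_w^{-1}\delta_w\xi$. Because $L_w^{-1}\delta_w\xi$ lies in $\Hzero$ and $\delta_w d$ restricted to $\Hzero$ is $L_w$, this equals $\delta_w\xi-\delta_w\xi=0$. Hence $\operatorname{im}P_w\subset\ker\delta_w$.
\item[(ii)] For $\xi=du$, set $u_0=u-\bar u\,\one\in\Hzero$, so that $du=du_0$. Then $P_w(du)=du_0-dL_w^{-1}L_wu_0=0$, so $P_w$ annihilates $dC^0(G)$.
\item[(iii)] If $\delta_w\xi=0$, then $P_w\xi=\xi$ directly.
\end{itemize}
Together these show that $\ker\delta_w+dC^0(G)=C^1(G)$, since $\xi=P_w\xi+dL_w^{-1}\delta_w\xi$.

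For directness and orthogonality we use the weighted adjoint identity
\[
\langle \xi,du\rangle_w=\langle w\odot\xi,du\rangle_{C^1}=\langle\delta_w\xi,u\rangle_{C^0}.
\]
This shows $\ker\delta_w\perp_w dC^0(G)$. Since $w>0$, the form $\langle\cdot,\cdot\rangle_w$ is an inner product, so the two orthogonal summands intersect only in $0$. The sum is therefore direct, and $P_w$, being the projection onto $\ker\delta_w$ along a $w$-orthogonal complement, is the $w$-orthogonal projector. As a cross-check, the dimensions match: $\dim\ker\delta_w=|E|-(|V|-1)$ because $\xi\mapsto w\odot\xi$ is invertible and $\operatorname{rank}\delta=|V|-1$ by Corollary~\ref{cor:im_delta}, while $\dim dC^0(G)=|V|-1$.

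No step here is a genuine obstacle. The only point needing care is that $L_w^{-1}$ is defined only on $\Hzero$, and this is settled by Lemma~\ref{lem:im_delta_hzero} together with the mean-zero normalization used in step (ii).
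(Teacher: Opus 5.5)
Your proof is correct and follows essentially the same route as the paper. The weighted adjoint identity $\langle\xi,du\rangle_w=\langle\delta_w\xi,u\rangle_{C^0}$ gives $\langle\cdot,\cdot\rangle_w$-orthogonality and hence directness, the splitting $\xi=(\xi-dL_w^{-1}\delta_w\xi)+dL_w^{-1}\delta_w\xi$ gives the sum, and centering $u_0=u-\bar u\,\one$ shows that $P_w$ annihilates $dC^0(G)$; your dimension count is a harmless extra cross-check that the argument does not need.
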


\begin{proof}
For $\xi\in\ker\delta_w$ and $\chi\in C^0(G)$ the adjoint identity gives $\langle\xi,d\chi\rangle_w=\langle\delta_w\xi,\chi\rangle=0$, so $\ker\delta_w$ and $dC^0(G)$ are weighted-orthogonal. Given $\xi\in C^1(G)$, Lemma~\ref{lem:im_delta_hzero} places $\delta_w\xi$ in $\Hzero$, and Lemma~\ref{lem:weighted_laplacian} produces $\eta:=L_w^{-1}\delta_w\xi\in\Hzero$; writing $\xi_{\mathrm{h}}:=\xi-d\eta$ one finds $\delta_w\xi_{\mathrm{h}}=\delta_w\xi-L_w\eta=0$. Hence $\xi=\xi_{\mathrm{h}}+d\eta$ with $\xi_{\mathrm{h}}\in\ker\delta_w$, proving~\eqref{eq:weighted_hodge_decomp}; directness follows from the weighted orthogonality.

Finally, $P_w$ is the identity on $\ker\delta_w$ because $\delta_w\xi=0$ kills the correction term, while on $d\chi$ the centered function $\chi_0:=\chi-\bar\chi\,\one\in\Hzero$ gives $P_w(d\chi)=d\chi_0-dL_w^{-1}L_w\chi_0=0$. Thus $P_w$ is exactly the weighted orthogonal projector claimed.
\end{proof}

\subsection{Smoothness, differential, and analyticity}
\label{sec:smoothness}

With the weighted calculus of Section~\ref{sec:weighted_hodge} in hand, we can compute the derivative of $\Picc$ by applying the implicit function theorem to the nonlinear coclosed equation
\begin{equation}
\mathcal G(\omega,\phi):=\delta\sinh(\omega+d\phi)=0,
\qquad (\omega,\phi)\in C^1(G)\times\Hzero.
\label{eq:G_def}
\end{equation}
By Lemma~\ref{lem:im_delta_hzero}, $\mathcal G$ takes values in $\Hzero$; by Theorem~\ref{thm:companion}, for each $\omega$ there is a unique $\phi=\phi^*(\omega)\in\Hzero$ with $\mathcal G(\omega,\phi)=0$. The partial derivative of $\mathcal G$ in the $\phi$ slot is exactly a weighted Laplacian, with weight determined by the solution; this is what produces the differential formula.

\begin{theorem}
\label{thm:smoothness}
The map $\Picc\colon C^1(G)\to C^1(G)$ is smooth. For $\omega\in C^1(G)$, set $z:=\Picc(\omega)$ and $w:=\cosh z$. Then for every $\xi\in C^1(G)$,
\begin{equation}
D\Picc(\omega)[\xi]=\xi-dL_w^{-1}\delta_w\xi,
\label{eq:differential_formula}
\end{equation}
where $\delta_w$, $L_w$ use the weight $w=\cosh z$.
\end{theorem}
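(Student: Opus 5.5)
We apply the implicit function theorem to the map $\mathcal G\colon C^1(G)\times\Hzero\to\Hzero$ of \eqref{eq:G_def}. It takes values in $\Hzero$ by Lemma~\ref{lem:im_delta_hzero}, and it is smooth (indeed real analytic), since it is the composition of the linear maps $d$ and $\delta$ with the edgewise $\sinh$.

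\emph{Partial derivative in $\phi$.} For $\eta\in\Hzero$, differentiating edgewise gives
\[
D_\phi\mathcal G(\omega,\phi)[\eta]=\delta\bigl(\cosh(\omega+d\phi)\odot d\eta\bigr)=L_w\eta,
\qquad w=\cosh(\omega+d\phi).
\]
Since $w_e\ge 1>0$, Lemma~\ref{lem:weighted_laplacian} shows that $L_w$ is an isomorphism of $\Hzero$. The same edgewise computation gives the partial derivative in $\omega$:
\[
D_\omega\mathcal G(\omega,\phi)[\xi]=\delta(w\odot\xi)=\delta_w\xi.
\]

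\emph{Local smoothness of $\phi^*$.} Fix $\omega_0$ and set $\phi_0:=\phi^*(\omega_0)$; then $\mathcal G(\omega_0,\phi_0)=0$ by Theorem~\ref{thm:companion}. The implicit function theorem provides neighbourhoods $U\ni\omega_0$ and $W\ni\phi_0$ and a smooth map $g\colon U\to W$ such that, for $\omega\in U$, the point $g(\omega)$ is the unique zero of $\mathcal G(\omega,\cdot)$ in $W$. Global uniqueness of the zero of $\mathcal G(\omega,\cdot)$ on all of $\Hzero$ (Theorem~\ref{thm:companion} combined with Proposition~\ref{prop:stationarity}) then forces $\phi^*(\omega)=g(\omega)$ on $U$.

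The main subtlety lies here. The implicit function theorem by itself yields only a local branch of solutions. To identify that branch with $\phi^*$, we must use global uniqueness, so that no continuity of $\phi^*$ has to be assumed a priori. Since uniqueness holds on all of $\Hzero$, the identification is immediate, and $\phi^*$ is smooth near every point. Hence $\Picc=\omega+d\phi^*(\omega)$ is smooth on all of $C^1(G)$.

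\emph{The differential formula.} Differentiating the identity $\mathcal G(\omega,\phi^*(\omega))=0$ in the direction $\xi$ gives
\[
\delta_w\xi+L_w\,D\phi^*(\omega)[\xi]=0,
\qquad\text{so}\qquad
D\phi^*(\omega)[\xi]=-L_w^{-1}\delta_w\xi,
\]
with $w=\cosh(\omega+d\phi^*(\omega))=\cosh\Picc(\omega)$. Substituting this into $D\Picc(\omega)[\xi]=\xi+d\,D\phi^*(\omega)[\xi]$ yields \eqref{eq:differential_formula}. By Lemma~\ref{lem:weighted_hodge_decomposition}, this operator is exactly the weighted projector $P_w$.
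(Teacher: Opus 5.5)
Your proposal is correct and follows the paper's proof. Both apply the implicit function theorem to $\mathcal G(\omega,\phi)=\delta\sinh(\omega+d\phi)$, use that $D_\phi\mathcal G=L_w$ is invertible by Lemma~\ref{lem:weighted_laplacian}, and then differentiate implicitly to get $D\phi^*(\omega)[\xi]=-L_w^{-1}\delta_w\xi$. Your explicit appeal to global uniqueness of the zero of $\mathcal G(\omega,\cdot)$ in $\Hzero$, to identify the local implicit-function branch with $\phi^*$, fills in a step the paper leaves implicit, and it is the right way to close it.
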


\begin{proof}
Fix $\omega$ and set $\phi^*=\phi^*(\omega)$. The partial derivative of $\mathcal G$ at $(\omega,\phi^*)$ in the $\phi$ slot is
\begin{equation}
D_\phi\mathcal G(\omega,\phi^*)[\eta]=\delta(w\odot d\eta)=L_w\eta,
\end{equation}
which by Lemma~\ref{lem:weighted_laplacian} is an isomorphism of $\Hzero$. The implicit function theorem~\cite{krantzparks} therefore gives a smooth dependence $\omega\mapsto\phi^*(\omega)$, and consequently $\Picc(\omega)=\omega+d\phi^*(\omega)$ is smooth.

Differentiating $\mathcal G(\omega,\phi^*(\omega))=0$ in the direction $\xi$ and writing $\eta=D\phi^*(\omega)[\xi]$, one obtains
\begin{equation}
\delta\bigl(w\odot(\xi+d\eta)\bigr)=0,
\qquad\text{i.e.,}\qquad
L_w\eta=-\delta_w\xi,
\end{equation}
so $\eta=-L_w^{-1}\delta_w\xi$ and $D\Picc(\omega)[\xi]=\xi+d\eta=\xi-dL_w^{-1}\delta_w\xi$.
\end{proof}

The right-hand side of~\eqref{eq:differential_formula} is exactly the weighted Hodge projector $P_w$ of Lemma~\ref{lem:weighted_hodge_decomposition}. Combined with the tangent-space description of $\Mcc$ (obtained by differentiating $\delta\sinh z=0$), this gives the following geometric reading: at each point, $\Picc$ acts infinitesimally as a weighted Hodge projector onto the tangent space of $\Mcc$.

\begin{theorem}
\label{thm:weighted_hodge}
With $z=\Picc(\omega)$ and $w=\cosh z$, the tangent space to $\Mcc$ at $z$ is
\begin{equation}
T_z\Mcc=\ker\delta_w,
\label{eq:tangent_space}
\end{equation}
and $D\Picc(\omega)$ is the orthogonal projector onto $T_z\Mcc$ along $dC^0(G)$ with respect to $\langle\cdot,\cdot\rangle_w$. In particular $\|D\Picc(\omega)\|_w\le 1$, with equality whenever $\beta_1(G)>0$.
\end{theorem}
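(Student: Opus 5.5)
The plan is to identify $T_z\Mcc$ through the parametrization of Proposition~\ref{prop:global_slice}, and then to read off the remaining claims from Lemma~\ref{lem:weighted_hodge_decomposition} and Theorem~\ref{thm:smoothness}.

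Since $\Mcc=\operatorname{arsinh}(\ker\delta)$ and edgewise $\operatorname{arsinh}$ is a diffeomorphism of $C^1(G)$, the tangent space is the image of $\ker\delta$ under the differential of $\operatorname{arsinh}$ at $y=\sinh z$. That differential is edgewise multiplication by $1/\sqrt{1+y_e^2}=1/\cosh z_e=1/w_e$. Hence
\[
T_z\Mcc=\{\xi:\ w\odot\xi\in\ker\delta\}=\ker\delta_w .
\]
Equivalently, one can differentiate $\delta\sinh z=0$ along a curve in $\Mcc$ to get $\delta(\cosh z\odot\dot z)=0$, which gives the inclusion $T_z\Mcc\subset\ker\delta_w$. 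Equality then follows from a dimension count: both spaces have dimension $\beta_1(G)$, because multiplication by $w$ is an isomorphism carrying $\ker\delta_w$ onto $\ker\delta$.

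Next, Theorem~\ref{thm:smoothness} gives $D\Picc(\omega)=I-dL_w^{-1}\delta_w=P_w$ with $w=\cosh\Picc(\omega)$. Lemma~\ref{lem:weighted_hodge_decomposition} identifies $P_w$ as the $\langle\cdot,\cdot\rangle_w$-orthogonal projector onto $\ker\delta_w=T_z\Mcc$ along $dC^0(G)$. This proves the projector statement.

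For the norm bound, I would use the $w$-orthogonal decomposition $\xi=P_w\xi+(\xi-P_w\xi)$ and the Pythagorean identity $\|\xi\|_w^2=\|P_w\xi\|_w^2+\|\xi-P_w\xi\|_w^2$, which gives $\|P_w\xi\|_w\le\|\xi\|_w$. If $\beta_1(G)>0$, then $\ker\delta_w\neq\{0\}$; choosing any nonzero $\xi\in\ker\delta_w$ gives $P_w\xi=\xi$, so the operator norm equals $1$.

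No step is a real obstacle. The only point needing care is the derivative of $\operatorname{arsinh}$: one must use $\sqrt{1+\sinh^2 z_e}=\cosh z_e$ to identify the pushed-forward space exactly as $\ker\delta_w$, rather than merely obtaining an inclusion.
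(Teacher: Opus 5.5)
Your proof is correct and matches the paper's: Theorem~\ref{thm:smoothness} together with Lemma~\ref{lem:weighted_hodge_decomposition} gives the projector statement, and the norm bound is the standard one for orthogonal projectors. The one difference is the tangent-space step. The paper only differentiates $\delta\sinh z=0$, which by itself gives the inclusion $T_z\Mcc\subset\ker\delta_w$. Your pushforward of $\ker\delta$ under $D\operatorname{arsinh}$, or alternatively your dimension count, supplies the reverse inclusion that the paper's one-line ``whence'' leaves implicit.
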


\begin{proof}
Differentiating $\delta\sinh z=0$ at $z$ gives $D(\delta\sinh)(z)[\xi]=\delta(\cosh z\odot\xi)=\delta_w\xi$, whence~\eqref{eq:tangent_space}. Theorem~\ref{thm:smoothness} then identifies $D\Picc(\omega)$ with the weighted projector $P_w$, which by Lemma~\ref{lem:weighted_hodge_decomposition} is exactly the $\langle\cdot,\cdot\rangle_w$-orthogonal projector onto $\ker\delta_w=T_z\Mcc$. Orthogonal projectors have $\|\cdot\|_w$-norm at most $1$, with equality iff their range has positive dimension, and here $\dim\ker\delta_w=\beta_1(G)$.
\end{proof}

At the origin, the weights trivialize and the weighted Hodge projector reduces to the ordinary one.

\begin{corollary}
\label{cor:origin_linearization}
$D\Picc(0)=\PiH$; equivalently, $\Picc$ agrees with the ordinary Hodge projector to first order at the origin.
\end{corollary}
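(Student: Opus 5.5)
The plan is to specialize the differential formula of Theorem~\ref{thm:smoothness} at $\omega=0$. The only input needed is the value of $\Picc(0)$, which then fixes the weight $w=\cosh\Picc(0)$.

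First, I identify $\Picc(0)$. The zero field satisfies $\delta\sinh 0=\delta 0=0$, so $0\in\Mcc$. By the uniqueness part of Proposition~\ref{prop:basic_props} (equivalently, idempotence together with Corollary~\ref{cor:image_picc}), $0$ is the unique nonlinear coclosed representative of $[0]=dC^0(G)$, and hence $\Picc(0)=0$. Alternatively, the minimizer of $\Phi_0$ on $\Hzero$ is $\phi^*=0$, since $\Phi_0\ge 0=\Phi_0(0)$, and this minimizer is unique by Theorem~\ref{thm:companion}.

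Second, I compute the weight. With $z=\Picc(0)=0$ we get $w=\cosh 0=\one$ edgewise. The weighted operators then reduce to the unweighted ones: $\delta_w\xi=\delta(\one\odot\xi)=\delta\xi$, and $L_w=(\delta d)|_{\Hzero}=L_0$.

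Third, I substitute into \eqref{eq:differential_formula}, which gives
\begin{equation}
D\Picc(0)[\xi]=\xi-dL_0^{-1}\delta\xi=\PiH\xi
\end{equation}
for every $\xi\in C^1(G)$. Since $\Picc$ is smooth by Theorem~\ref{thm:smoothness} and $\Picc(0)=0$, Taylor's theorem yields $\Picc(\omega)=\PiH\omega+o(\|\omega\|)$. This is the first-order agreement claimed in the statement.

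No step here is a real obstacle. The only point requiring care is justifying $\Picc(0)=0$, and that follows from uniqueness in Theorem~\ref{thm:companion}.
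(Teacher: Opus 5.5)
Your proposal is correct and follows the paper's proof: you identify $\Picc(0)=0$, note that the weight $w=\cosh 0\equiv 1$ reduces $\delta_w$ and $L_w$ to $\delta$ and $L_0$, and substitute into the differential formula of Theorem~\ref{thm:smoothness}. Your uniqueness argument for $\Picc(0)=0$ and the closing Taylor remark spell out steps the paper leaves implicit.
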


\begin{proof}
At $\omega=0$ one has $\Picc(0)=0$ and $w=\cosh 0\equiv 1$, so $\delta_w=\delta$ and $L_w=L_0$. Theorem~\ref{thm:smoothness} then gives $D\Picc(0)=I-dL_0^{-1}\delta=\PiH$.
\end{proof}

The smoothness of Theorem~\ref{thm:smoothness} actually upgrades to real analyticity, because the defining map $\mathcal G$ is itself real analytic in $(\omega,\phi)$.

\begin{proposition}
\label{prop:analytic_picc}
The map $\Picc\colon C^1(G)\to C^1(G)$ is real analytic.
\end{proposition}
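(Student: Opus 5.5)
The plan is to rerun the argument of Theorem~\ref{thm:smoothness} with the real-analytic implicit function theorem~\cite{krantzparks} in place of the smooth one. First I check that the defining map $\mathcal G(\omega,\phi)=\delta\sinh(\omega+d\phi)$ of~\eqref{eq:G_def} is real analytic as a map $C^1(G)\times\Hzero\to\Hzero$. Its components are finite linear combinations of functions $\sinh\bigl(\omega(e)+\phi(e_+)-\phi(e_-)\bigr)$, each an entire function composed with an affine map. The coordinates on $\Hzero$ are obtained by choosing a basis of this subspace, and the target lies in $\Hzero$ by Lemma~\ref{lem:im_delta_hzero}. So $\mathcal G$ is in fact the restriction of an entire map of several complex variables to real points.

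Next, at every point $(\omega,\phi^*(\omega))$ on the solution set, the partial derivative $D_\phi\mathcal G=L_w$ with $w=\cosh(\Picc(\omega))$ is an isomorphism of $\Hzero$ by Lemma~\ref{lem:weighted_laplacian}. The analytic implicit function theorem then gives a neighborhood $U$ of $\omega$ and a real-analytic map $\tilde\phi\colon U\to\Hzero$ with $\mathcal G(\omega',\tilde\phi(\omega'))=0$ and $\tilde\phi(\omega)=\phi^*(\omega)$.

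The one point that needs care is matching this local analytic branch with the globally defined $\phi^*$. This follows from the uniqueness in Theorem~\ref{thm:companion}: for each $\omega'\in U$ there is exactly one $\phi\in\Hzero$ solving $\mathcal G(\omega',\phi)=0$, so $\tilde\phi(\omega')=\phi^*(\omega')$ on $U$. Continuity of $\phi^*$ is not even required for this step. Hence $\phi^*$ is real analytic near every point, and therefore globally. Finally, $\Picc(\omega)=\omega+d\phi^*(\omega)$ is the sum of a linear map and the composition of the linear map $d$ with an analytic map, so it is real analytic.

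I expect no real obstacle here. The only subtleties are stating the analytic implicit function theorem on the subspace $\Hzero$, which is handled by fixing linear coordinates, and the global-uniqueness patching described above.
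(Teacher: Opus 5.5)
Your proposal is correct and follows essentially the same route as the paper: you establish analyticity of $\mathcal G$, use invertibility of $D_\phi\mathcal G=L_w$ from Lemma~\ref{lem:weighted_laplacian}, and apply the analytic implicit function theorem at every point. Your explicit identification of the local analytic branch with the global $\phi^*$, via the uniqueness in Theorem~\ref{thm:companion}, fills in a step the paper leaves implicit.
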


\begin{proof}
The map $\mathcal G$ in~\eqref{eq:G_def} is real analytic, since $\sinh$ is entire and $d,\delta$ are linear. At every $\omega_0\in C^1(G)$, the partial derivative $D_\phi\mathcal G(\omega_0,\phi^*(\omega_0))=L_{w_0}$ with $w_0=\cosh(\omega_0+d\phi^*(\omega_0))$ is an isomorphism of $\Hzero$ by Lemma~\ref{lem:weighted_laplacian}. The analytic implicit function theorem~\cite{krantzparks} therefore supplies a real-analytic branch $\omega\mapsto\phi^*(\omega)$ near $\omega_0$; since $\omega_0$ was arbitrary, $\Picc(\omega)=\omega+d\phi^*(\omega)$ is real analytic on all of $C^1(G)$.
\end{proof}

Corollary~\ref{cor:origin_linearization} and Proposition~\ref{prop:analytic_picc} together prepare the ground for the local expansion: $\Picc$ is a real-analytic deformation of $\PiH$ that agrees with it to first order at the origin. The next section computes the leading correction.

\section{Local nonlinear correction}
\label{sec:cubic}

Corollary~\ref{cor:origin_linearization} and Proposition~\ref{prop:analytic_picc} together describe $\Picc$ at and near the origin to first order: $D\Picc(0)=\PiH$, and $\omega\mapsto\Picc(\omega)$ is real analytic. We now compute the leading correction to this linear picture. Because the underlying potential $\psi(t)=\cosh t-1$ is even, the selector $\Picc$ inherits an \emph{odd} symmetry, which forces every even-order Taylor coefficient at the origin to vanish. The first potentially nonzero correction is therefore cubic, and the rest of the section identifies it: first along lines and then, by polarization, as a symmetric trilinear form on $C^1(G)$.

\begin{proposition}
\label{prop:odd}
For every $\omega\in C^1(G)$, $\Picc(-\omega)=-\Picc(\omega)$.
\end{proposition}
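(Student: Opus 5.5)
The plan is to use the uniqueness part of Proposition~\ref{prop:basic_props}: $\Picc(\omega)$ is the \emph{only} element of $[\omega]$ satisfying $\delta\sinh z=0$. So it suffices to show that $z:=-\Picc(\omega)$ lies in $[-\omega]$ and satisfies the nonlinear coclosed equation. Uniqueness then forces $\Picc(-\omega)=-\Picc(\omega)$.

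First, by Definition~\ref{def:picc}, $\Picc(\omega)=\omega+d\phi^*(\omega)$ with $\phi^*(\omega)\in\Hzero$. Hence
\[
-\Picc(\omega)=-\omega+d\bigl(-\phi^*(\omega)\bigr)\in[-\omega],
\]
and $-\phi^*(\omega)\in\Hzero$ because the mean-zero slice is a linear subspace. Second, $\sinh$ is odd and acts edgewise, so $\sinh(-\Picc(\omega))=-\sinh\Picc(\omega)$. Linearity of $\delta$ then gives
\[
\delta\sinh(-\Picc(\omega))=-\delta\sinh\Picc(\omega)=0.
\]
Theorem~\ref{thm:companion} with $h=-\omega$ now identifies $-\Picc(\omega)$ as the unique coclosed representative of $[-\omega]$, which is $\Picc(-\omega)$.

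As a cross-check, the argument can also be run variationally. Evenness of $\cosh$ gives $\Phi_{-\omega}(-\phi)=\Phi_\omega(\phi)$ for all $\phi\in\Hzero$. Since $\phi\mapsto-\phi$ is a bijection of $\Hzero$, the unique minimizers satisfy $\phi^*(-\omega)=-\phi^*(\omega)$, and the identity follows from Definition~\ref{def:picc}.

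There is no genuine obstacle here. The only point needing care is to invoke uniqueness within the correct class $[-\omega]$ and to check that the normalization $\phi\in\Hzero$ is preserved under negation, which is immediate.
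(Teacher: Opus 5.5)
Your proof is correct and follows essentially the same route as the paper: you show that $-\Picc(\omega)$ lies in $[-\omega]$ and satisfies $\delta\sinh(-\Picc(\omega))=-\delta\sinh\Picc(\omega)=0$ because $\sinh$ is odd, and then uniqueness from Theorem~\ref{thm:companion} gives $\Picc(-\omega)=-\Picc(\omega)$. Your variational cross-check, using $\Phi_{-\omega}(-\phi)=\Phi_\omega(\phi)$ and the fact that $\phi\mapsto-\phi$ is a bijection of $\Hzero$, is also valid.
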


\begin{proof}
If $z=\Picc(\omega)$, then $z\in[\omega]$ and $\delta\sinh z=0$. Since $\sinh$ is odd, the field $-z$ lies in $[-\omega]$ and satisfies $\delta\sinh(-z)=-\delta\sinh z=0$. By uniqueness (Theorem~\ref{thm:companion}) this forces $\Picc(-\omega)=-z$.
\end{proof}

Combining real analyticity, oddness, and $D\Picc(0)=\PiH$, the Taylor series of $\Picc$ at the origin contains only odd-order terms, and its linear term is $\PiH$. The next theorem derives the cubic one by specializing to the one-parameter family $\varepsilon\mapsto\Picc(\varepsilon\xi)$.

\begin{theorem}
\label{thm:cubic}
For every $\xi\in C^1(G)$, as $\varepsilon\to 0$,
\begin{equation}
\Picc(\varepsilon\xi)
=\varepsilon\,\PiH(\xi)
-\frac{\varepsilon^3}{6}\,dL_0^{-1}\delta\bigl((\PiH\xi)^{\odot 3}\bigr)
+O(\varepsilon^5).
\label{eq:cubic_expansion}
\end{equation}
\end{theorem}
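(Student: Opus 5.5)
We will expand the one-parameter family $z(\varepsilon):=\Picc(\varepsilon\xi)$ in powers of $\varepsilon$ and read off the coefficients from the defining equations. By Proposition~\ref{prop:analytic_picc} the map $\varepsilon\mapsto z(\varepsilon)$ is real analytic. By Proposition~\ref{prop:odd} it is odd in $\varepsilon$. We may therefore write
\[
z(\varepsilon)=\varepsilon z_1+\varepsilon^3 z_3+O(\varepsilon^5).
\]
Corollary~\ref{cor:origin_linearization} gives $z_1=D\Picc(0)[\xi]=\PiH\xi$, so all that remains is to identify $z_3$.

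Two constraints pin $z_3$ down. First, $z(\varepsilon)\in[\varepsilon\xi]$, so $z(\varepsilon)=\varepsilon\xi+d\phi^*(\varepsilon\xi)$ with $\phi^*$ analytic and odd. Since $\PiH\xi=\xi-dL_0^{-1}\delta\xi$ already absorbs the linear part, comparing $\varepsilon^3$ coefficients shows $z_3=d\eta_3$ for some $\eta_3\in\Hzero$; that is, $z_3$ is exact. Second, the nonlinear coclosed equation $\delta\sinh z(\varepsilon)=0$ holds. Expanding $\sinh z=z+\tfrac16 z^{\odot 3}+O(|z|^5)$ edgewise gives
\[
\delta\sinh z(\varepsilon)=\varepsilon\,\delta z_1+\varepsilon^3\Bigl(\delta z_3+\tfrac16\delta(z_1^{\odot 3})\Bigr)+O(\varepsilon^5)=0.
\]
The $\varepsilon$ term vanishes because $z_1\in\ker\delta$. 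The $\varepsilon^3$ term yields $\delta d\eta_3=-\tfrac16\delta(z_1^{\odot 3})$, that is, $L_0\eta_3=-\tfrac16\delta\bigl((\PiH\xi)^{\odot3}\bigr)$. The right-hand side lies in $\Hzero$ by Lemma~\ref{lem:im_delta_hzero}, so $L_0$ can be inverted there (Lemma~\ref{lem:linear_hodge}). Hence
\[
z_3=d\eta_3=-\tfrac16\,dL_0^{-1}\delta\bigl((\PiH\xi)^{\odot3}\bigr),
\]
which is exactly the cubic term in~\eqref{eq:cubic_expansion}.

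The main point requiring care is justifying the remainder bookkeeping. Analyticity of $\varepsilon\mapsto\phi^*(\varepsilon\xi)$ lets us substitute the power series into $\sinh$ and match coefficients legitimately. Oddness guarantees that the error after the cubic term is $O(\varepsilon^5)$ rather than merely $O(\varepsilon^4)$. Both facts come directly from Propositions~\ref{prop:analytic_picc} and~\ref{prop:odd}, so no further estimate is needed.
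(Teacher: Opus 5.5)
Your proposal is correct and follows the paper's proof essentially step for step: a purely odd analytic expansion of $z(\varepsilon)=\Picc(\varepsilon\xi)$, $z_1=\PiH\xi$ from Corollary~\ref{cor:origin_linearization}, exactness of $z_3$ because $z(\varepsilon)-\varepsilon\xi\in dC^0(G)$, and matching the $\varepsilon^3$ coefficient of $\delta\sinh z(\varepsilon)=0$ to obtain $L_0\eta_3=-\tfrac16\delta(z_1^{\odot 3})$. Your aside that $\PiH\xi$ ``absorbs the linear part'' is not needed for the exactness of $z_3$, since the cubic coefficient of $\varepsilon\xi+d\phi^*(\varepsilon\xi)$ is simply $d$ applied to the cubic coefficient of $\phi^*(\varepsilon\xi)$; it does no harm, though.
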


\begin{proof}
Write $z(\varepsilon):=\Picc(\varepsilon\xi)$. By Proposition~\ref{prop:analytic_picc} this is real analytic near $\varepsilon=0$, and by Proposition~\ref{prop:odd} it satisfies $z(-\varepsilon)=-z(\varepsilon)$, so its Taylor expansion is purely odd:
\begin{equation}
z(\varepsilon)=\varepsilon z_1+\varepsilon^3 z_3+O(\varepsilon^5),
\label{eq:z_expansion}
\end{equation}
with $z_1=\PiH(\xi)$ by Corollary~\ref{cor:origin_linearization}. The difference $z(\varepsilon)-\varepsilon\xi$ lies in the linear subspace $dC^0(G)$ for every $\varepsilon$, so each of its Taylor coefficients lies in $dC^0(G)$ as well; in particular $z_3=d\phi_3$ for some $\phi_3\in\Hzero$.

Inserting the expansion $\sinh t=t+\tfrac16 t^3+O(t^5)$ into the nonlinear coclosed equation $\delta\sinh z(\varepsilon)=0$ gives
\begin{equation}
\sinh z(\varepsilon)
=\varepsilon z_1+\varepsilon^3\Bigl(z_3+\tfrac16 z_1^{\odot 3}\Bigr)+O(\varepsilon^5),
\end{equation}
and matching the $\varepsilon^3$ coefficient of $\delta\sinh z(\varepsilon)=0$ yields $\delta z_3+\tfrac16\delta(z_1^{\odot 3})=0$. Since $z_3=d\phi_3$ and $L_0=\delta d$ is invertible on $\Hzero$ (Lemma~\ref{lem:linear_hodge}), this rewrites as $L_0\phi_3=-\tfrac16\delta(z_1^{\odot 3})$, whence
\begin{equation}
\phi_3=-\tfrac16 L_0^{-1}\delta(z_1^{\odot 3}),
\qquad
z_3=-\tfrac16 dL_0^{-1}\delta(z_1^{\odot 3}).
\end{equation}
Substituting $z_1=\PiH(\xi)$ into~\eqref{eq:z_expansion} gives~\eqref{eq:cubic_expansion}.
\end{proof}

Polarization lifts the one-variable expansion of Theorem~\ref{thm:cubic} to a full multivariate Taylor formula at the origin, in which the cubic term appears as a symmetric trilinear form.

\begin{corollary}
\label{cor:multivar_cubic}
$D^2\Picc(0)=0$, and the third Fr\'echet derivative of $\Picc$ at the origin is the symmetric trilinear map
\begin{equation}
D^3\Picc(0)[\xi,\eta,\zeta]
=-dL_0^{-1}\delta\bigl((\PiH\xi)\odot(\PiH\eta)\odot(\PiH\zeta)\bigr).
\label{eq:third_derivative_formula}
\end{equation}
Consequently, as $\omega\to 0$ in $C^1(G)$,
\begin{equation}
\Picc(\omega)
=\PiH(\omega)
-\frac16\,dL_0^{-1}\delta\bigl((\PiH\omega)^{\odot 3}\bigr)
+O(\|\omega\|_{C^1}^5).
\label{eq:multivar_cubic}
\end{equation}
\end{corollary}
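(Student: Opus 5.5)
Since $\Picc$ is real analytic on all of $C^1(G)$ (Proposition~\ref{prop:analytic_picc}), it has a convergent Taylor expansion at the origin,
\begin{equation*}
\Picc(\omega)=\sum_{k\ge1}\frac1{k!}D^k\Picc(0)[\omega,\dots,\omega],
\end{equation*}
valid for $\|\omega\|_{C^1}$ small, with each $D^k\Picc(0)$ a symmetric $k$-linear map. Substituting $\omega=\varepsilon\xi$ identifies $\tfrac1{k!}D^k\Picc(0)[\xi,\dots,\xi]$ with the $\varepsilon^k$ coefficient of $\varepsilon\mapsto\Picc(\varepsilon\xi)$. The plan is to read off the diagonal values of $D^2\Picc(0)$ and $D^3\Picc(0)$ from Theorem~\ref{thm:cubic} and Proposition~\ref{prop:odd}, and then recover the full multilinear maps by polarization.

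For the second derivative, Proposition~\ref{prop:odd} makes $\varepsilon\mapsto\Picc(\varepsilon\xi)$ odd, so its $\varepsilon^2$ coefficient vanishes. Hence $D^2\Picc(0)[\xi,\xi]=0$ for every $\xi$. The polarization identity $B[\xi,\eta]=\tfrac14\bigl(B[\xi+\eta,\xi+\eta]-B[\xi-\eta,\xi-\eta]\bigr)$ for symmetric bilinear maps then gives $D^2\Picc(0)=0$. The same oddness argument kills all even-order terms, which we need for the remainder estimate below.

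For the third derivative, comparing with Theorem~\ref{thm:cubic} gives
\begin{equation*}
\tfrac16 D^3\Picc(0)[\xi,\xi,\xi]=-\tfrac16\,dL_0^{-1}\delta\bigl((\PiH\xi)^{\odot3}\bigr).
\end{equation*}
The candidate map $T[\xi,\eta,\zeta]:=-dL_0^{-1}\delta\bigl((\PiH\xi)\odot(\PiH\eta)\odot(\PiH\zeta)\bigr)$ is trilinear, because $\PiH$, $d$, $L_0^{-1}$ and $\delta$ are linear and $\odot$ is multilinear. It is also symmetric, because $\odot$ is commutative. Two symmetric trilinear maps that agree on the diagonal coincide by the polarization formula
\begin{equation*}
T[\xi,\eta,\zeta]=\tfrac1{48}\sum_{s_1,s_2,s_3=\pm1}s_1s_2s_3\,T[v,v,v],\qquad v=s_1\xi+s_2\eta+s_3\zeta.
\end{equation*}
This proves~\eqref{eq:third_derivative_formula}.

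For~\eqref{eq:multivar_cubic}, we plug these derivatives into Taylor's theorem. The even terms vanish, the linear term is $\PiH$ by Corollary~\ref{cor:origin_linearization}, and the cubic term is as computed. The remainder is then $\sum_{k\ge5}$ of the analytic series. Since the series converges on a ball, this tail is $O(\|\omega\|^5)$ uniformly in direction; alternatively one can use the integral form of the remainder with $D^5\Picc$ bounded near $0$.

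The only real subtlety is this uniformity. Theorem~\ref{thm:cubic} gives $O(\varepsilon^5)$ only for each fixed $\xi$, so we deliberately work with the multivariate Taylor expansion directly rather than with the line expansions.
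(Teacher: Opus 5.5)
Your proposal is correct and follows the paper's proof essentially step for step. Analyticity plus oddness kill the even-order derivatives. Matching the $\varepsilon^3$-coefficient of the multivariate Taylor expansion along $\omega=\varepsilon\xi$ against Theorem~\ref{thm:cubic} gives the diagonal of $D^3\Picc(0)$, and polarization recovers the full symmetric trilinear map. Your explicit polarization constants, and your remark that the uniform $O(\|\omega\|_{C^1}^5)$ remainder must come from the multivariate expansion rather than the fixed-direction estimate of Theorem~\ref{thm:cubic}, fill in details the paper leaves implicit.
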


\begin{proof}
Real analyticity and oddness together kill every even-order derivative of $\Picc$ at $0$; in particular $D^2\Picc(0)=0$. The Taylor expansion at $0$ therefore reduces to
\begin{equation}
\Picc(\omega)
=\PiH(\omega)+\tfrac16 D^3\Picc(0)[\omega,\omega,\omega]+O(\|\omega\|_{C^1}^5),
\end{equation}
using Corollary~\ref{cor:origin_linearization} for the linear term. Evaluating along $\omega=\varepsilon\xi$ and matching the $\varepsilon^3$-coefficient with Theorem~\ref{thm:cubic} gives the diagonal
\begin{equation}
D^3\Picc(0)[\xi,\xi,\xi]=-dL_0^{-1}\delta\bigl((\PiH\xi)^{\odot 3}\bigr).
\end{equation}
Since a symmetric cubic form is determined by its diagonal, polarization delivers the symmetric trilinear extension~\eqref{eq:third_derivative_formula}; inserting this into the Taylor expansion yields~\eqref{eq:multivar_cubic}.
\end{proof}

The cubic correction~\eqref{eq:multivar_cubic} is the best local description of the difference $\Picc-\PiH$ available: it quantifies how much the nonlinear selector bends away from the linear Hodge projector at small scales, and it identifies the exact form of the leading deviation. Being infinitesimal, however, it cannot by itself decide when the two selectors agree globally; Taylor data at the origin says nothing about what happens far from it. The next section answers the global question by a sharp combinatorial criterion on~$G$.

\section{The cactus criterion}
\label{sec:cactus}

This section answers the global question left open by the local expansion of Section~\ref{sec:cubic}: for which graphs does the nonlinear selector $\Picc$ coincide with the linear Hodge projector $\PiH$ on \emph{all} of $C^1(G)$? The answer is purely combinatorial and, remarkably, independent of the choice of potential: for every admissible edge potential $\psi$, the equality $\Pi_\psi=\PiH$ holds globally if and only if $G$ is a \emph{cactus graph}, a graph whose simple cycles do not overlap along edges.

The argument has two sides. The forward direction, that every admissible $\Pi_\psi$ collapses to $\PiH$ on a cactus, rests on a structural feature of cacti: their simple cycles supply an edge-disjoint basis of $\ker\delta$, and an odd nonlinearity acts on each basis element independently without coupling cycles. The reverse direction, that non-cactus graphs always force a genuine departure, comes from an obstruction concentrated at a single vertex where two overlapping cycles split. We first record the combinatorial definitions and illustrate them on trees and single cycles, then build the cycle-form basis, introduce admissible potentials, prove the shared-path obstruction, and finally combine everything into the main theorem.

\subsection{Cactus graphs and two illustrative cases}
\label{sec:cactus_defs}

\begin{definition}
\label{def:cactus}
A connected graph is a \emph{cactus graph} if every edge belongs to at most one simple cycle, equivalently if any two simple cycles share at most one vertex.
\end{definition}

The equivalence of the two formulations is standard; we use the edge-based form throughout, because the forward direction of the criterion reduces to edge-disjointness of cycle supports. Two special families of cacti---trees and single cycles---show in transparent form the mechanism by which the cactus property forces $\Picc=\PiH$.

\begin{proposition}
\label{prop:tree}
If $G$ is a connected tree, then $\Picc(\omega)=0$ for every $\omega\in C^1(G)$.
\end{proposition}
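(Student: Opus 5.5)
The plan is to show that on a tree the space $\Mcc$ collapses to the single point $0$, and then invoke Corollary~\ref{cor:image_picc}. Since $G$ is a connected tree, $|E|=|V|-1$ and so $\beta_1(G)=0$. By Corollary~\ref{cor:im_delta}, $\operatorname{rank}\delta=|V|-1=|E|$, which means $\delta\colon C^1(G)\to C^0(G)$ is injective. Hence $\ker\delta=\{0\}$.

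Next we use the description $\imop\Picc=\Mcc=\operatorname{arsinh}(\ker\delta)$. Because $\ker\delta=\{0\}$ and the edgewise $\operatorname{arsinh}$ sends $0$ to $0$, we get $\Mcc=\{0\}$. Therefore $\Picc(\omega)\in\{0\}$ for every $\omega$, which is the claim.

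For a more hands-on version, take $z=\Picc(\omega)$. Proposition~\ref{prop:basic_props} gives $\delta\sinh z=0$, and injectivity of $\delta$ then gives $\sinh z=0$. Since $\sinh$ vanishes only at $0$, it follows that $z=0$.

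There is essentially no obstacle here. The only point needing care is the injectivity of $\delta$ on a tree, and that follows from the rank count: alternatively, $dC^0(G)=C^1(G)$ because $\ker\delta=(\imop d)^\perp$ and $d$ has rank $|V|-1=|E|$. As a consistency check, the result agrees with $\PiH=0$ on a tree, since $\PiH$ projects onto $\ker\delta=\{0\}$. So trees are a (trivial) instance of the cactus collapse $\Picc=\PiH$.
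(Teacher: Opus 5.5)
Your proof is correct. It is the dual of the paper's argument rather than the same argument.

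The paper works on the domain side. Since $H^1(G;\R)=0$, one has $dC^0(G)=C^1(G)$, so every class $[\omega]$ contains the zero field. That field is trivially nonlinear coclosed, and the uniqueness half of Theorem~\ref{thm:companion} then forces $\Picc(\omega)=0$.

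You work on the target side. You show $\ker\delta=\{0\}$, so the coclosed slice $\Mcc=\{z:\delta\sinh z=0\}$ is the single point $0$. After that you only need the existence half, namely that $\Picc(\omega)\in\Mcc$ (Proposition~\ref{prop:basic_props}). The two inputs, surjectivity of $d$ and injectivity of $\delta$, are equivalent via $\ker\delta=(\imop d)^{\perp}$, as you note yourself.

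The paper's route never needs to know what $\Mcc$ looks like. Yours gives slightly more: it shows that $0$ is the only field on a tree satisfying $\delta\sinh z=0$ at all, independently of the class. It also shows that the image manifold of Proposition~\ref{prop:global_slice} collapses to a point, consistent with $\dim\Mcc=\beta_1(G)=0$.
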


\begin{proof}
A tree has $H^1(G;\R)=0$, so every $\omega\in C^1(G)$ lies in the single cohomology class represented by the zero field. Since $0$ is trivially nonlinear coclosed, uniqueness in Theorem~\ref{thm:companion} gives $\Picc(\omega)=0$.
\end{proof}

\begin{proposition}
\label{prop:cycle}
Let $G=C_n$ be the cycle graph with its edges oriented consistently around the cycle. For every $\omega\in C^1(C_n)$,
\begin{equation}
\Picc(\omega)(e_i)=\frac{1}{n}\sum_{j=1}^n\omega(e_j),
\qquad i=1,\ldots,n.
\label{eq:cycle_formula}
\end{equation}
\end{proposition}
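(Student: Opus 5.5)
The plan is to exhibit the claimed constant field explicitly, check that it lies in the cohomology class $[\omega]$ and satisfies the nonlinear coclosed equation, and then conclude by the uniqueness part of Theorem~\ref{thm:companion} (equivalently Proposition~\ref{prop:basic_props}). Label the vertices $v_1,\dots,v_n$ and orient $e_i$ from $v_i$ to $v_{i+1}$ (indices mod $n$), which is the consistent orientation in the statement. Set $m:=\frac1n\sum_j\omega(e_j)$ and let $z:=m\,\mathbf{1}_E$ denote the constant edge field with value $m$.

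First, $z$ satisfies $\delta\sinh z=0$. Since $z$ is constant, $\sinh z$ is the constant field $\sinh(m)$. At each vertex $v_i$ exactly one edge ($e_{i-1}$) has head $v_i$ and exactly one edge ($e_i$) has tail $v_i$. By \eqref{eq:coboundary}, $(\delta\sinh z)(v_i)=\sinh m-\sinh m=0$. This is the only place the consistent orientation is used.

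Second, $z\in[\omega]$, that is, $\omega-z\in dC^0(C_n)$. Here I will use the cycle-sum criterion: an edge field $\xi$ on $C_n$ is exact iff $\sum_i\xi(e_i)=0$. Necessity is the telescoping of $\sum_i(u(v_{i+1})-u(v_i))$. For sufficiency, define $u(v_1)=0$ and $u(v_{k+1})=\sum_{i\le k}\xi(e_i)$. This is well defined around the cycle exactly because the total sum vanishes, and then $du=\xi$. Applying the criterion to $\xi=\omega-z$, we get $\sum_i(\omega(e_i)-m)=0$ by the choice of $m$, so $\omega-z$ is exact.

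Finally, Theorem~\ref{thm:companion} says $[\omega]$ contains exactly one field satisfying $\delta\sinh(\cdot)=0$, namely $\Picc(\omega)$. Hence $\Picc(\omega)=z$, which is \eqref{eq:cycle_formula}. There is no real obstacle; the only step needing care is the exactness criterion, which amounts to $\beta_1(C_n)=1$, with the cycle functional generating the obstruction. As a remark, the same constant field is also $\PiH(\omega)$, since $\delta z=0$ for the identical reason, so $\Picc=\PiH$ on $C_n$.
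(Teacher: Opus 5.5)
Your proof is correct, but it runs in the opposite logical direction from the paper's.

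The paper starts from $z=\Picc(\omega)$, whose existence is guaranteed by Theorem~\ref{thm:companion}. It reads the coclosed equation at each vertex as $\sinh z(e_i)=\sinh z(e_{i-1})$ and uses injectivity of $\sinh$ to force $z$ to be constant. It then fixes the constant by summing $z=\omega+du$ around the cycle, which needs only the easy telescoping half of your cycle-sum criterion.

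You instead write down the constant field, verify it, and invoke the uniqueness half of Theorem~\ref{thm:companion}. This avoids injectivity of $\sinh$ altogether, since a constant field is coclosed for any edgewise nonlinearity on a consistently oriented cycle. The price is proving the converse half of the exactness criterion, integrating $\xi$ along $v_1\to v_2\to\cdots\to v_n$, which you do correctly.

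What each route buys:
\begin{itemize}
\item Your verify-then-uniqueness structure is exactly the one the paper later uses for (b)$\Rightarrow$(a) of Theorem~\ref{thm:cactus}. Your closing remark that $z=\PiH(\omega)$ is that argument specialized to $C_n$.
\item The paper's derivation shows directly that any nonlinear coclosed representative must be this constant.
\end{itemize}

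One small correction: the consistent orientation is not used only in the divergence computation. Your formula $(du)(e_i)=u(v_{i+1})-u(v_i)$ and the telescoping of $\sum_i\bigl(u(v_{i+1})-u(v_i)\bigr)$ also rely on $e_i$ pointing from $v_i$ to $v_{i+1}$.
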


\begin{proof}
Write $z=\Picc(\omega)=\omega+du$ with $u\in C^0(C_n)$. At each vertex of the oriented cycle, the nonlinear coclosed equation reads $\sinh z(e_i)=\sinh z(e_{i-1})$, and injectivity of $\sinh$ forces every $z(e_i)$ to equal a common number $a$. Summing $z(e_j)=\omega(e_j)+(du)(e_j)$ around the oriented cycle kills the exact part by telescoping, so $na=\sum_i\omega(e_i)$, whence $a=\tfrac{1}{n}\sum_j\omega(e_j)$.
\end{proof}

In both cases $\Picc$ coincides with $\PiH$, consistent with trees and single cycles being cactus graphs. The rest of the section extends this from the two degenerate families to every cactus, and, crucially, to every admissible potential $\psi$.

\subsection{Cycle-form basis on a cactus}
\label{sec:cycle_basis}

The forward direction of the cactus criterion rests on a single combinatorial fact: on a cactus, the simple cycles can be encoded as $\pm 1$-valued edge fields with pairwise disjoint supports, and these fields together form a basis of $\ker\delta$.

\begin{definition}
\label{def:cycle_form}
Let $C$ be a simple cycle in $G$ with a chosen orientation. The \emph{oriented cycle form} $h_C\in C^1(G)$ is defined by
\begin{equation}
h_C(e)=
\begin{cases}
+1 & \text{if $e$ lies on $C$ and agrees with the chosen orientation,}\\
-1 & \text{if $e$ lies on $C$ and disagrees with the chosen orientation,}\\
0  & \text{if $e$ does not lie on $C$.}
\end{cases}
\label{eq:cycle_form}
\end{equation}
\end{definition}

\begin{lemma}
\label{lem:cactus_basis}
Let $G$ be a connected cactus with simple cycles $C_1,\dots,C_m$, and let $h_j:=h_{C_j}$. Then
\begin{enumerate}[label=\textup{(\alph*)}]
\item the supports of the $h_j$ are pairwise edge-disjoint;
\item each $h_j$ lies in $\ker\delta$;
\item $m=\beta_1(G)$, so $\{h_1,\dots,h_m\}$ is a basis of $\ker\delta$.
\end{enumerate}
\end{lemma}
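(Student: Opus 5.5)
Part (a) follows directly from Definition~\ref{def:cactus}. If some edge $e$ lay in the supports of both $h_i$ and $h_j$ with $i\ne j$, then $e$ would belong to the two distinct simple cycles $C_i$ and $C_j$, which the edge-based form of the definition forbids. For (b), fix a vertex $v$. If $v\notin C_j$, then $(\delta h_j)(v)=0$ trivially. If $v\in C_j$, exactly two edges of $C_j$ meet $v$. With the sign convention of Definition~\ref{def:cycle_form}, $h_j(e)$ times the incidence sign of $e$ at $v$ equals $+1$ for the edge entering $v$ along the chosen traversal and $-1$ for the edge leaving it. So the two contributions to $(\delta h_j)(v)$ from \eqref{eq:coboundary} cancel. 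A short sign check on the four orientation cases confirms this, and I will just record it.

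Part (c) carries the content. First, the $h_j$ are linearly independent: by (a) they have pairwise disjoint nonempty supports, so $\sum_j c_jh_j=0$ evaluated on an edge of $C_j$ gives $c_j=0$. It remains to show $m=\beta_1(G)=|E|-|V|+1$, since $\dim\ker\delta=\beta_1(G)$ by Corollary~\ref{cor:im_delta} and rank--nullity. I will prove this by induction on $m$, using an ear-removal argument. If $m=0$, $G$ is a tree and $|E|=|V|-1$. If $m\ge1$, choose an edge $e$ on some cycle $C_1$ and delete it. The graph $G-e$ stays connected, because $C_1-e$ is a path joining the endpoints of $e$. Its simple cycles are exactly the $C_j$ with $j\ge2$: any simple cycle of $G-e$ is a simple cycle of $G$ avoiding $e$, and since $e$ lies only on $C_1$, the cycles $C_j$ with $j\ge2$ avoid $e$ and survive. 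Being a subgraph, $G-e$ is still a cactus. Induction gives $m-1=(|E|-1)-|V|+1$, hence $m=\beta_1(G)$.

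An alternative for the dimension count avoids induction. Show directly that the $h_j$ span $\ker\delta$: given $y\in\ker\delta$, subtract $\sum_j y(e_j)h_j(e_j)h_j$, where $e_j$ is any edge of $C_j$. This leaves a flow vanishing on one edge of each cycle, so it is supported on a forest. A divergence-free flow on a forest is zero, by induction on leaves. I will use the induction on $m$ as the main route and keep this as a backup. The main obstacle is this counting step, specifically the claim that deleting one cycle edge removes exactly one simple cycle and creates none. That is where edge-disjointness from (a) is used essentially. The rest is routine bookkeeping with Lemma~\ref{lem:im_delta_hzero} and Corollary~\ref{cor:im_delta}.
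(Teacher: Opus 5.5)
Your proposal is correct and follows the paper's proof essentially step for step: (a) from the edge form of the cactus definition, (b) by cancellation of the incoming and outgoing contributions at each vertex of $C_j$, and (c) by linear independence from disjoint supports plus induction on $m$ via deleting one cycle edge, combined with $\dim\ker\delta=\beta_1(G)$ from Corollary~\ref{cor:im_delta}. Your extra justifications fill in details the paper leaves implicit: that $G-e$ stays connected because $C_1-e$ is a path, and that its simple cycles are exactly $C_2,\dots,C_m$.
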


\begin{proof}
Part~(a) is the defining property of a cactus: any two simple cycles share at most one vertex and hence no edges. For~(b), each vertex of $C_j$ is incident to one incoming and one outgoing oriented edge of $C_j$ whose $\pm 1$ contributions cancel, so $\delta h_j=0$.

For~(c), the edge-disjoint supports make $\{h_1,\dots,h_m\}$ linearly independent, so it suffices to show $m=\beta_1(G)$. We induct on $m$: if $m=0$ the graph is a tree with $\beta_1(G)=0$; if $m\ge 1$, deleting any edge of $C_m$ keeps $G$ connected, preserves the cactus property, reduces the number of simple cycles to $m-1$, and drops $\beta_1$ by exactly one (one edge removed, no vertices). The inductive hypothesis then gives $\beta_1(G)=m$. Combined with the identity $\dim\ker\delta=\beta_1(G)$ for connected $G$, this proves that the linearly independent family of the right size is a basis.
\end{proof}

The edge-disjointness in (a) is the key feature that will make the forward direction of the cactus criterion work for any odd nonlinearity: different cycles cannot interact through a shared edge, so an odd $\sigma$ acts on each basis element independently.

\subsection{Admissible potentials and the shared-path obstruction}
\label{sec:admissible_obstruction}

The cactus criterion does not depend on the specific choice $\psi(t)=\cosh t-1$; it holds for every admissible edge potential in the following sense.

\begin{definition}
\label{def:admissible}
A function $\psi\colon\R\to[0,\infty)$ is an \emph{admissible edge potential} if
\begin{enumerate}[label=\textup{(\roman*)}]
\item $\psi\in C^2(\R)$, $\psi$ is even, and $\psi(0)=0$;
\item $\psi$ is strictly convex;
\item $\psi''$ is not identically constant.
\end{enumerate}
We write $\sigma:=\psi'$.
\end{definition}

Conditions (i)--(ii) make $\sigma$ odd, strictly increasing, and hence injective, with $\sigma(0)=0$. Condition (iii) excludes the quadratic case $\psi(t)=\tfrac12 c\,t^2$, for which the $\psi$-selector reduces to $\PiH$ on every graph. Besides the principal example $\psi(t)=\cosh t-1$, other admissible potentials include $\psi(t)=|t|^\alpha/\alpha$ for $\alpha>2$ and $\psi(t)=(\cosh t-1)^p$ for $p>1$.

The variational existence theorem of Section~\ref{sec:nonlinear_energy} carries over with almost no change to any admissible $\psi$: the functional is strictly convex, coercive, and produces a unique minimizer whose Euler--Lagrange equation is the $\psi$-coclosed condition $\delta\sigma(z)=0$.

\begin{proposition}
\label{prop:admissible_existence}
Let $\psi$ be admissible with $\sigma=\psi'$. For $h\in C^1(G)$ define
\begin{equation}
\Phi_{\psi,h}(u):=\sum_{e\in E^\to}\psi\bigl(h(e)+(du)(e)\bigr),
\qquad u\in\Hzero.
\label{eq:psi_energy}
\end{equation}
Then $\Phi_{\psi,h}$ has a unique minimizer $u_{\psi,h}\in\Hzero$, and $z_{\psi,h}:=h+du_{\psi,h}$ is the unique representative of $[h]$ satisfying
\begin{equation}
\delta\sigma(z_{\psi,h})=0.
\label{eq:psi_coclosed}
\end{equation}
We write $\Pi_\psi(h):=z_{\psi,h}$.
\end{proposition}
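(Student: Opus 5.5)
The plan is to repeat the three-step structure of the proof of Theorem~\ref{thm:companion} (strict convexity, coercivity, Euler--Lagrange characterization). Two ingredients used there are specific to $\cosh$ and must be replaced: the uniform Hessian lower bound $\cosh\ge 1$, and the quadratic minorant $\cosh t-1\ge\tfrac12 t^2$. The quadratic gap~\eqref{eq:quadratic_gap} is not claimed for general $\psi$, so I drop it.

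\emph{Strict convexity.} I will not use second derivatives, since $\psi''$ may vanish at isolated points (e.g.\ $\psi(t)=|t|^\alpha/\alpha$ at $t=0$). Instead I argue directly from the definition. Take $u\neq v$ in $\Hzero$ and $\lambda\in(0,1)$. By Lemma~\ref{lem:poincare}, $du\neq dv$, so some edge $e_0$ has $(h+du)(e_0)\neq(h+dv)(e_0)$. Convexity of $\psi$ gives the convex-combination inequality edgewise, and strict convexity makes it strict at $e_0$. Summing over edges gives $\Phi_{\psi,h}(\lambda u+(1-\lambda)v)<\lambda\Phi_{\psi,h}(u)+(1-\lambda)\Phi_{\psi,h}(v)$.

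\emph{Coercivity.} This is the only step needing a new idea, since there is no quadratic minorant. I will use a linear one. Because $\psi$ is even and convex with $\psi(0)=0$, the point $0$ is a minimizer of $\psi$. Strict convexity forces $\psi(1)>0$, since otherwise $\psi$ would equal $0$ at $-1$, $0$ and $1$, contradicting strict convexity. For $t\ge 1$ the difference quotient $\psi(t)/t$ is nondecreasing, and evenness extends this to $t\le -1$. Hence $\psi(t)\ge\psi(1)(|t|-1)$ for all $t$, which is trivial when $|t|<1$ because $\psi\ge 0$. Summing over edges gives
\[
\Phi_{\psi,h}(u)\ge\psi(1)\bigl(\|h+du\|_{\ell^1}-|E|\bigr)\ge\psi(1)\bigl(c_G\|u\|_{C^0}-\|h\|_{\ell^1}-|E|\bigr),
\]
using $\|\cdot\|_{\ell^1}\ge\|\cdot\|_{C^1}$ and Lemma~\ref{lem:poincare}. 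So $\Phi_{\psi,h}\to\infty$ on $\Hzero$. By continuity a minimizer exists, and by the previous step it is unique.

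\emph{Euler--Lagrange characterization.} Since $\psi\in C^1$, the functional $\Phi_{\psi,h}$ is $C^1$ with
\[
D\Phi_{\psi,h}(\phi)[\eta]=\langle\sigma(z),d\eta\rangle_{C^1}=\langle\delta\sigma(z),\eta\rangle_{C^0},\qquad z=h+d\phi.
\]
The proof of Proposition~\ref{prop:stationarity} then applies verbatim, with $\sinh$ replaced by $\sigma$ and Lemma~\ref{lem:im_delta_hzero} placing $\delta\sigma(z)$ in $\Hzero$. It shows that $\phi$ is critical if and only if $\delta\sigma(z)=0$. The minimizer is critical, so $z_{\psi,h}$ satisfies~\eqref{eq:psi_coclosed}.

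For uniqueness, take any $z\in[h]$ with $\delta\sigma(z)=0$ and write $z=h+du$ with $u$ normalized into $\Hzero$; subtracting a constant does not change $du$. Then $u$ is a critical point of the convex $C^1$ function $\Phi_{\psi,h}$, hence a global minimizer, hence $u=u_{\psi,h}$. I expect coercivity to be the main (mild) obstacle, and the linear-growth bound above is how I will handle it.
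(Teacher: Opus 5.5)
Your proof is correct and follows the paper's argument essentially step for step. You obtain strict convexity from Lemma~\ref{lem:poincare} plus edgewise strict convexity of $\psi$ (the paper uses only the midpoint, you use general $\lambda$), coercivity from the same linear minorant $\psi(t)\ge\psi(1)(|t|-1)$, and the Euler--Lagrange characterization by the argument of Proposition~\ref{prop:stationarity}, while your remark that a critical point of the convex $C^1$ functional is a global minimizer just spells out the uniqueness-of-representative step the paper leaves terse.
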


\begin{proof}
\emph{Strict convexity.} For distinct $u,v\in\Hzero$, Lemma~\ref{lem:poincare} gives $du\neq dv$, so strict convexity of $\psi$ on each edge yields $\Phi_{\psi,h}((u+v)/2)<\tfrac12\Phi_{\psi,h}(u)+\tfrac12\Phi_{\psi,h}(v)$.

\emph{Coercivity.} Since $\psi$ is strictly convex, even, and minimized at $0$, one has $\psi(1)>0$; convexity gives $\psi(1)\le t^{-1}\psi(t)$ for $t\ge 1$, and evenness extends this to $\psi(t)\ge\psi(1)(|t|-1)$ for all $t\in\R$. Summing edgewise, using $\|\xi\|_{\ell^1(E^\to)}\ge\|\xi\|_{C^1}$, and applying Lemma~\ref{lem:poincare},
\begin{equation}
\Phi_{\psi,h}(u)\ge \psi(1)\bigl(c_G\|u\|_{C^0}-\|h\|_{C^1}-|E|\bigr)\to\infty
\end{equation}
as $\|u\|_{C^0}\to\infty$ in $\Hzero$. Combined with strict convexity, coercivity produces a unique minimizer $u_{\psi,h}$.

\emph{Euler--Lagrange characterization.} For $\eta\in\Hzero$,
\begin{equation}
D\Phi_{\psi,h}(u)[\eta]
=\sum_{e\in E^\to}\sigma\bigl(h(e)+(du)(e)\bigr)\,(d\eta)(e)
=\langle\delta\sigma(h+du),\eta\rangle_{C^0},
\end{equation}
so, exactly as in Proposition~\ref{prop:stationarity}, critical points are characterized by $\delta\sigma(h+du)=0$. The unique minimizer $u_{\psi,h}$ therefore gives the unique $\psi$-coclosed representative $z_{\psi,h}\in[h]$.
\end{proof}

The analogue of the quadratic gap~\eqref{eq:quadratic_gap} is \emph{not} claimed for a general admissible $\psi$, but Proposition~\ref{prop:admissible_existence} provides all that the cactus criterion needs: a well-defined nonlinear selector $\Pi_\psi$ for every admissible potential.

We now turn to the reverse direction of the criterion. If $G$ is not a cactus, it contains two simple cycles sharing at least one edge, and this overlap produces a linearly coclosed field whose image under $\sigma$ fails to be coclosed. The obstruction is local: it sits at a single vertex where the two overlapping cycles split, and its non-vanishing reduces to a functional equation for the scalar function $\sigma(2t)-2\sigma(t)$ which fails precisely when $\psi$ is non-quadratic.

\begin{lemma}
\label{lem:shared_path_obstruction}
Let $\psi$ be an admissible edge potential with $\sigma=\psi'$, and let $C_1,C_2$ be simple cycles in $G$ sharing a nontrivial path. Then there exist orientations of $C_1,C_2$ whose oriented cycle forms agree on the shared path, and a scalar $t_0>0$, such that $z:=h_{C_1}+h_{C_2}$ satisfies
\begin{equation}
t_0 z\in\ker\delta,
\qquad
\delta\sigma(t_0 z)\neq 0.
\label{eq:obstruction}
\end{equation}
In other words, $t_0 z$ is linearly coclosed but not $\psi$-coclosed.
\end{lemma}

\begin{proof}
Since $\ker\delta$ is a linear subspace, $tz\in\ker\delta$ for every $t\in\R$, so only the second statement requires work.

Let $P$ be the maximal common path of $C_1,C_2$, and let $v$ be an endpoint of $P$ where the two cycles separate. Denote by $e_0$ the edge of $P$ incident to $v$ and by $e_1,e_2$ the distinct non-shared edges of $C_1,C_2$ at $v$. Choose the orientations of $C_1,C_2$ so that both traverse $e_0$ into $v$ and then leave $v$ along $e_1$ and $e_2$ respectively. Writing $\iota(v,e)\in\{+1,-1,0\}$ for the signed incidence indicator ($+1$ if $e$ points into $v$, $-1$ if out, $0$ otherwise), the signed contributions at $v$ are
\begin{equation}
\iota(v,e_0)\,z(e_0)=2,\qquad
\iota(v,e_1)\,z(e_1)=-1,\qquad
\iota(v,e_2)\,z(e_2)=-1,
\end{equation}
and no other support edge of $z$ is incident to $v$. Since $\sigma$ is odd, the divergence of $\sigma(tz)$ at $v$ is therefore
\begin{equation}
(\delta\sigma(tz))(v)=\sigma(2t)-2\sigma(t)=:g(t).
\end{equation}

It remains to show $g\not\equiv 0$ on $[0,\infty)$. If $g\equiv 0$, then $g'(t)=2\psi''(2t)-2\psi''(t)\equiv 0$, giving $\psi''(2t)=\psi''(t)$ for every $t\ge 0$. Iterating this identity yields $\psi''(t)=\psi''(t/2^n)$ for every $n\in\mathbb{N}$; letting $n\to\infty$ and using continuity of $\psi''$ at $0$ forces $\psi''(t)=\psi''(0)$ for every $t\ge 0$, and evenness of $\psi''$ then makes it globally constant, contradicting admissibility~(iii). Hence some $t_0>0$ has $g(t_0)\neq 0$, so $(\delta\sigma(t_0 z))(v)\neq 0$ and in particular $\delta\sigma(t_0 z)\neq 0$.
\end{proof}

\subsection{The cactus criterion}
\label{sec:cactus_theorem}

With the basis of Lemma~\ref{lem:cactus_basis} and the obstruction of Lemma~\ref{lem:shared_path_obstruction} in hand, the main theorem of the paper follows in a few lines.

\begin{theorem}
\label{thm:cactus}
Let $\psi$ be an admissible edge potential. For a finite connected graph $G$, the following are equivalent:
\begin{enumerate}[label=\textup{(\alph*)}]
\item $\Pi_\psi(\omega)=\PiH(\omega)$ for every $\omega\in C^1(G)$;
\item $G$ is a cactus graph.
\end{enumerate}
In particular, the same graph-theoretic condition governs global collapse for every admissible $\psi$ simultaneously.
\end{theorem}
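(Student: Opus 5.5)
We prove the two implications separately. In both directions the guiding principle is that $\Pi_\psi$ and $\PiH$ agree at $\omega$ exactly when $\PiH(\omega)$ is also $\psi$-coclosed. Indeed, $\PiH(\omega)$ always lies in $[\omega]$. By the uniqueness clause of Proposition~\ref{prop:admissible_existence}, $\Pi_\psi(\omega)=\PiH(\omega)$ holds if and only if $\delta\sigma(\PiH(\omega))=0$. So (a) is equivalent to the statement that $\sigma$ maps $\ker\delta=\imop\PiH$ into $\ker\delta$.

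\emph{(b)$\Rightarrow$(a).} Let $G$ be a cactus, and take the basis $h_1,\dots,h_m$ of $\ker\delta$ from Lemma~\ref{lem:cactus_basis}. Any $y\in\ker\delta$ can be written $y=\sum_j c_jh_j$. Because the supports are pairwise edge-disjoint and each $h_j$ takes values in $\{0,\pm1\}$, on an edge $e$ in the support of $h_j$ we have $y(e)=c_jh_j(e)$. Since $\sigma$ is odd, $\sigma(c_jh_j(e))=\sigma(c_j)h_j(e)$. On edges outside every support, $y(e)=0$ and $\sigma(0)=0$. Hence
\[
\sigma(y)=\sum_j\sigma(c_j)\,h_j\in\ker\delta .
\]
Applying this to $y=\PiH(\omega)$ gives $\delta\sigma(\PiH(\omega))=0$, and uniqueness then yields $\Pi_\psi(\omega)=\PiH(\omega)$.

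\emph{(a)$\Rightarrow$(b), by contrapositive.} Suppose $G$ is not a cactus. Then some edge lies on two distinct simple cycles $C_1,C_2$, so the two cycles share a nontrivial path. We need them to genuinely separate at an endpoint of a maximal common path, which is what Lemma~\ref{lem:shared_path_obstruction} assumes. Since $C_1\neq C_2$, neither cycle's edge set contains the other's, so the shared part is a proper nonempty subset of each. Hence some maximal common path $P$ has an endpoint $v$ where the next edges $e_1\in C_1$ and $e_2\in C_2$ differ. Lemma~\ref{lem:shared_path_obstruction} then provides $y:=t_0z\in\ker\delta$ with $\delta\sigma(y)\neq0$. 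Take $\omega:=y$. Lemma~\ref{lem:linear_hodge} gives $\PiH(\omega)=y$, while $\delta\sigma(y)\ne0$ shows that $y$ is not the $\psi$-coclosed representative of $[y]$. Therefore $\Pi_\psi(\omega)\neq\PiH(\omega)$, and (a) fails.

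The main obstacle is the combinatorial point in the reverse direction: checking that the separation vertex used in Lemma~\ref{lem:shared_path_obstruction} really exists. We also need that, at $v$, the only support edges of $z$ incident to $v$ are $e_0,e_1,e_2$. This requires care because $C_1$ and $C_2$ could meet again elsewhere, possibly at $v$ itself through other edges. A simple cycle passes through $v$ at most once, however, so each cycle contributes exactly two edges at $v$. We have identified these as $e_0$ shared, plus $e_1$ and $e_2$, so the divergence computation in that lemma is valid. The remaining verifications are routine.
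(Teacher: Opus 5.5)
Your proof is correct and follows the paper's argument essentially step for step. For (b)$\Rightarrow$(a) you use the edge-disjoint cycle basis of Lemma~\ref{lem:cactus_basis} together with oddness of $\sigma$, and for (a)$\Rightarrow$(b) you use the linearly coclosed but not $\psi$-coclosed field of Lemma~\ref{lem:shared_path_obstruction}, with your extra check that a separation vertex $v$ exists and that $e_0,e_1,e_2$ are the only support edges of $z$ at $v$ filling in a point the paper's lemma asserts without justification.
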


\begin{proof}
\emph{(b) $\Rightarrow$ (a).}
Assume $G$ is a cactus, and let $h_1,\ldots,h_m$ be the basis of $\ker\delta$ from Lemma~\ref{lem:cactus_basis}. Every $z\in\ker\delta$ decomposes uniquely as $z=\sum_j\alpha_j h_j$. Because the supports of the $h_j$ are edge-disjoint, on each edge $e$ belonging to cycle $C_j$ one has $z(e)=\alpha_j h_j(e)$ with $h_j(e)\in\{+1,-1\}$, while on edges outside every cycle $z(e)=0$. Oddness of $\sigma$ therefore gives $\sigma(z(e))=\sigma(\alpha_j)\,h_j(e)$ on each cycle edge and $\sigma(0)=0$ elsewhere, whence
\begin{equation}
\sigma(z)=\sum_{j=1}^m\sigma(\alpha_j)\,h_j,
\qquad
\delta\sigma(z)=\sum_{j=1}^m\sigma(\alpha_j)\,\delta h_j=0
\end{equation}
by Lemma~\ref{lem:cactus_basis}(b). Every element of $\ker\delta$ is thus automatically $\psi$-coclosed. For arbitrary $\omega\in C^1(G)$, the linear Hodge representative $z=\PiH(\omega)$ lies in $\ker\delta\cap[\omega]$, so $\delta\sigma(z)=0$; uniqueness of the $\psi$-coclosed representative in $[\omega]$ (Proposition~\ref{prop:admissible_existence}) then forces $\Pi_\psi(\omega)=z=\PiH(\omega)$.

\emph{(a) $\Rightarrow$ (b).}
Suppose $G$ is not a cactus, so it contains two simple cycles sharing a nontrivial path. Lemma~\ref{lem:shared_path_obstruction} produces $w\in\ker\delta$ with $\delta\sigma(w)\neq 0$. Since $w\in\ker\delta$, one has $\PiH(w)=w$; since $w$ fails to be $\psi$-coclosed, uniqueness gives $\Pi_\psi(w)\neq w$, so $\Pi_\psi\neq\PiH$ on $C^1(G)$.
\end{proof}

Specializing to the reciprocal cost potential recovers the headline statement announced in the introduction.

\begin{corollary}
\label{cor:cactus_cosh}
$\Picc=\PiH$ on all of $C^1(G)$ if and only if $G$ is a cactus graph.
\end{corollary}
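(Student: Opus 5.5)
The plan is to obtain Corollary~\ref{cor:cactus_cosh} as a direct specialization of Theorem~\ref{thm:cactus} to $\psi(t)=\cosh t-1$. Two things need checking: that this $\psi$ is admissible in the sense of Definition~\ref{def:admissible}, and that the selector $\Pi_\psi$ of Proposition~\ref{prop:admissible_existence} coincides with $\Picc$ of Definition~\ref{def:picc}.

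\emph{Admissibility.} I will verify the three conditions in turn.
\begin{enumerate}[label=\textup{(\roman*)}]
\item The function $\psi(t)=\cosh t-1$ is real analytic, hence $C^2$. It is even, satisfies $\psi(0)=0$, and takes values in $[0,\infty)$ because $\cosh t\ge 1$.
\item Strict convexity follows from $\psi''(t)=\cosh t\ge 1>0$.
\item $\psi''=\cosh$ is not constant, since for instance $\cosh 1\neq\cosh 0$.
\end{enumerate}
Moreover $\sigma=\psi'=\sinh$.

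\emph{Identification of the selectors.} For this $\psi$, the functional $\Phi_{\psi,h}$ in~\eqref{eq:psi_energy} is literally $\Phi_h$ from~\eqref{eq:sector_energy}. The two minimizers on $\Hzero$ therefore coincide, and so do the selectors:
\[
\Pi_\psi(\omega)=\omega+d\phi^*(\omega)=\Picc(\omega)\quad\text{for every }\omega\in C^1(G).
\]
Equivalently, both are characterized as the unique element of $[\omega]$ satisfying $\delta\sinh z=0$, by Proposition~\ref{prop:admissible_existence} and Theorem~\ref{thm:companion} respectively.

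\emph{Conclusion.} Substituting $\Pi_\psi=\Picc$ into the equivalence (a)$\Leftrightarrow$(b) of Theorem~\ref{thm:cactus} gives the corollary. There is no genuine obstacle. The only step needing a word is the identification of the two selectors, and it is immediate because they are defined as minimizers of the same functional over the same slice.
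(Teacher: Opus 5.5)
Your proposal is correct and takes the same route as the paper, whose proof is the one-line application of Theorem~\ref{thm:cactus} with $\psi(t)=\cosh t-1$. Your explicit checks of admissibility and of the identification $\Pi_\psi=\Picc$ fill in details the paper leaves implicit; that identification holds because both selectors minimize the same functional $\Phi_h$ over the same slice $\Hzero$.
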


\begin{proof}
Apply Theorem~\ref{thm:cactus} with $\psi(t)=\cosh t-1$.
\end{proof}

\begin{remark}
Theorem~\ref{thm:cactus} pinpoints exactly where any nonlinear selector departs from linear Hodge theory: not merely when $\beta_1(G)>0$, but precisely when distinct cycles interact through shared edges. The mechanism is structural in both directions. On a cactus, edge-disjoint cycle supports prevent any nonlinear coupling between cycles and the forward argument works for every odd $\sigma$; on a non-cactus, shared edges feed through $\sigma$ to produce an obstruction, and the non-quadratic condition~(iii) rules out the sole accidental cancellation (the linear $\sigma$) that could make the obstruction vanish. Trees and single cycles are both cactus graphs, so Propositions~\ref{prop:tree} and~\ref{prop:cycle} appear as special cases of Theorem~\ref{thm:cactus}.
\end{remark}

Theorem~\ref{thm:cactus} and Corollary~\ref{cor:cactus_cosh} complete the global picture of the paper: the nonlinear and linear selectors coincide everywhere on $C^1(G)$ precisely when the graph is free of overlapping cycles, and the criterion is independent of the choice of admissible potential. The next section illustrates the quantitative content of the non-cactus case on the simplest possible example.

\section{Worked example: the two-triangle graph}
\label{sec:example}

The cactus criterion is qualitative: it tells us \emph{where} $\Picc$ and $\PiH$ differ but not by how much. This final section gives a quantitative illustration on the smallest graph where a disagreement can occur. On the two-triangle graph---two triangles glued along a common edge---the five-dimensional nonlinear coclosed system reduces, by elementary vertex-local arguments, to a single transcendental equation in one scalar unknown. Solving it numerically, we compare the nonlinear representative to the linear Hodge one and record the self-concordant Newton certificate of Theorem~\ref{thm:newton} on this explicit example.

\subsection{The smallest non-cactus graph}
\label{sec:minimal}

\begin{proposition}
\label{prop:minimal_obstruction}
Among connected simple graphs, the non-cactus examples of smallest size have $4$ vertices and $5$ edges. The two-triangle graph of Figure~\ref{fig:two_triangle} realizes this minimum, and is therefore a smallest connected simple graph on which $\Picc$ can differ from $\PiH$.
\end{proposition}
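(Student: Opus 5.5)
The proof has two parts: a lower bound showing that every connected simple non-cactus graph has at least $4$ vertices and at least $5$ edges, and a verification that the two-triangle graph attains both bounds. The final clause then follows from Corollary~\ref{cor:cactus_cosh}. Since the two-triangle graph is not a cactus, $\Picc\neq\PiH$ on it. Conversely, on any connected simple graph with fewer vertices, or with fewer edges, the lower bound shows the graph is a cactus, so $\Picc=\PiH$ there.

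For the lower bound, suppose $G$ is connected, simple, and not a cactus. By Definition~\ref{def:cactus}, some edge lies on two distinct simple cycles $C_1,C_2$. Because $G$ is simple, every simple cycle has length at least $3$. If $|V|\le 3$, the only simple cycle available is the triangle on those three vertices, so two distinct cycles cannot exist; hence $|V|\ge 4$.

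For the edge bound, I would use Lemma~\ref{lem:cactus_basis}(c) in contrapositive form: a connected cactus has exactly $\beta_1(G)$ simple cycles. I would argue that two distinct simple cycles sharing an edge force $\beta_1(G)\ge 2$. Their cycle forms $h_{C_1},h_{C_2}$ both lie in $\ker\delta$ (the argument of Lemma~\ref{lem:cactus_basis}(b) works for any simple cycle). They are linearly independent because $C_1\neq C_2$: neither support is contained in the other, since a simple cycle properly contained in another as an edge set is impossible. Then $|E|=\beta_1(G)+|V|-1\ge 2+4-1=5$. This independence argument is the only step needing care. A cleaner alternative is to note that $C_1\cup C_2$ is a theta subgraph, i.e.\ three internally disjoint paths between two vertices. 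Such a graph has $\beta_1=2$, and simplicity forces it to have at least $4$ vertices and $5$ edges: at most one of the three paths can be a single edge, and the other two have length at least $2$.

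For attainment, the two-triangle graph has vertices $\{a,b,c,d\}$ and edges $ab, bc, ca, bd, da$, so $4$ vertices and $5$ edges. The triangles $abc$ and $abd$ share the edge $ab$, so it is not a cactus. Alternatively, Lemma~\ref{lem:shared_path_obstruction} applies directly to these two triangles and exhibits an explicit field on which $\Picc\neq\PiH$. I do not expect any real obstacle; the only subtlety is the distinct-cycles independence step, and the theta-subgraph argument handles it.
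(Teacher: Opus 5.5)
Your proof is correct and is essentially the paper's argument run contrapositively: a simple graph on at most three vertices has at most one cycle, and a non-cactus has $\beta_1(G)\ge 2$, so $|E|=\beta_1(G)+|V|-1\ge 5$. Your independence argument for $h_{C_1},h_{C_2}$ also justifies the paper's unproved step ``$\beta_1=1$ implies unicyclic'', and it covers all vertex counts at once rather than only the case $|V|=|E|=4$. One small fix to your optional theta remark: $C_1\cup C_2$ need not itself be a theta graph (two $4$-cycles in $K_4$ can share the disjoint edges $12$ and $34$ and have union all of $K_4$), but it always contains one, which is all the bound requires.
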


\begin{proof}
A connected simple graph on at most $3$ vertices is a tree or a triangle, and hence a cactus. A connected simple graph on $4$ vertices with $4$ edges has $\beta_1(G)=|E|-|V|+1=1$, so it is unicyclic and therefore also a cactus. Any non-cactus connected simple graph must therefore satisfy $|V|\ge 4$ and $|E|\ge 5$; the two-triangle graph attains both bounds.
\end{proof}

We now analyze a graph realizing this minimum. Its vertex set is $\{1,2,3,4\}$, and its five oriented edges are
\begin{equation}
a=(1\to 2),\quad b=(2\to 3),\quad c=(1\to 3),\quad d=(3\to 4),\quad e=(4\to 1).
\label{eq:edges}
\end{equation}
The two simple cycles $\{a,b,c\}$ and $\{c,d,e\}$ share the edge $c$, so $G$ is not a cactus and Theorem~\ref{thm:cactus} guarantees the existence of inputs $\omega$ with $\Picc(\omega)\neq\PiH(\omega)$.

\begin{figure}[t]
\centering
\begin{tikzpicture}[
    vertex/.style={circle, draw, fill=black!10, minimum size=20pt, inner sep=0pt, font=\small},
    edge/.style={-{Stealth[length=6pt]}, thick},
    elabel/.style={font=\footnotesize, fill=white, inner sep=1.5pt}
]
\node[vertex] (v1) at (0,0) {$1$};
\node[vertex] (v2) at (2,1.2) {$2$};
\node[vertex] (v3) at (4,0) {$3$};
\node[vertex] (v4) at (2,-1.2) {$4$};

\draw[edge] (v1) -- node[elabel, above left] {$a$} (v2);
\draw[edge] (v2) -- node[elabel, above right] {$b$} (v3);
\draw[edge] (v1) -- node[elabel, above] {$c$} (v3);
\draw[edge] (v3) -- node[elabel, below right] {$d$} (v4);
\draw[edge] (v4) -- node[elabel, below left] {$e$} (v1);
\end{tikzpicture}
\caption{The two-triangle graph. The two simple cycles share the edge $c$, so the graph is not a cactus.}
\label{fig:two_triangle}
\end{figure}
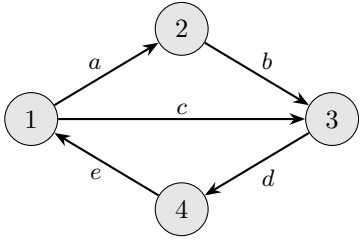

\subsection{Reducing the system to a scalar equation}
\label{sec:scalar_reduction}

Take the input
\begin{equation}
\omega=(5,\,0,\,0,\,0,\,0)
\label{eq:input}
\end{equation}
in the edge order $(a,b,c,d,e)$, and write $z=\omega+d\phi$ with $\phi\in\Hzero$. Only edges $a,b$ are incident to vertex $2$, and only edges $d,e$ to vertex $4$; the nonlinear coclosed equation at those two vertices thus reads $\sinh(z_a)=\sinh(z_b)$ and $\sinh(z_d)=\sinh(z_e)$, which by injectivity of $\sinh$ forces
\begin{equation}
z_a=z_b,
\qquad
z_d=z_e.
\label{eq:vertex24_reduction}
\end{equation}
Combining~\eqref{eq:vertex24_reduction} with the mean-zero constraint $\phi_1+\phi_2+\phi_3+\phi_4=0$ fixes three of the four components of $\phi$ in terms of $\phi_1$,
\begin{equation}
\phi_2=-\tfrac{15}{8},\qquad \phi_3=\tfrac{5}{4}-\phi_1,\qquad \phi_4=\tfrac{5}{8},
\end{equation}
and hence reduces the five edge values to a one-parameter family:
\begin{equation}
z_a=z_b=\tfrac{25}{8}-\phi_1,
\qquad
z_c=\tfrac{5}{4}-2\phi_1,
\qquad
z_d=z_e=\phi_1-\tfrac{5}{8}.
\label{eq:parametrization}
\end{equation}
Every representative of $[\omega]$ satisfying the local coclosed relations at vertices $2$ and $4$ is of this form. The remaining coclosed equation at vertex $1$ then takes the form of a single scalar equation in $\phi_1$; its form differs between the linear and the nonlinear theories.

\subsection{Linear versus nonlinear representative}
\label{sec:lin_nl_comparison}

In the linear theory, the coclosed equation at vertex $1$ is $z_e=z_a+z_c$. Substituting~\eqref{eq:parametrization} gives $\phi_1=5/4$ and therefore
\begin{equation}
\PiH(\omega)=\Bigl(\tfrac{15}{8},\,\tfrac{15}{8},\,-\tfrac{5}{4},\,\tfrac{5}{8},\,\tfrac{5}{8}\Bigr).
\label{eq:piH_ex}
\end{equation}

The nonlinear coclosed equation at vertex $1$ is $\sinh(z_e)=\sinh(z_a)+\sinh(z_c)$, which under~\eqref{eq:parametrization} becomes $F(\phi_1)=0$ for
\begin{equation}
F(t):=\sinh\!\left(t-\tfrac{5}{8}\right)-\sinh\!\left(\tfrac{25}{8}-t\right)-\sinh\!\left(\tfrac{5}{4}-2t\right).
\label{eq:F_def}
\end{equation}
Since $F'(t)=\cosh(t-\tfrac{5}{8})+\cosh(\tfrac{25}{8}-t)+2\cosh(\tfrac{5}{4}-2t)>0$, the function $F$ is strictly increasing, so it has at most one root. The sign changes $F(5/4)<0$ and $F(3/2)>0$ show that a root exists in $(5/4,3/2)$, and numerically
\begin{equation}
\phi_1\approx 1.35690123027,
\end{equation}
which gives
\begin{equation}
\Picc(\omega)\approx(1.76809877,\,1.76809877,\,-1.46380246,\,0.73190123,\,0.73190123).
\label{eq:picc_ex}
\end{equation}

The nonlinear representative differs from the linear one by
\begin{equation}
\|\Picc(\omega)-\PiH(\omega)\|_{C^1}\approx 0.30236234,
\end{equation}
and, consistent with the quadratic gap~\eqref{eq:quadratic_gap} of Theorem~\ref{thm:companion}, achieves a strictly lower $\cosh$-energy: $\sum_e(\cosh\PiH(\omega)_e-1)\approx 5.96610535$ versus $\sum_e(\cosh\Picc(\omega)_e-1)\approx 5.86724188$. The reduced three-parameter form of the two representatives is summarized in Table~\ref{tab:two_triangle}.

\begin{table}[ht]
\centering
\begin{tabular}{@{}lccc@{}}
\toprule
Representative & $z_a=z_b$ & $z_c$ & $z_d=z_e$ \\
\midrule
$\PiH(\omega)$  & $1.87500000$ & $-1.25000000$ & $0.62500000$ \\
$\Picc(\omega)$ & $1.76809877$ & $-1.46380246$ & $0.73190123$ \\
\bottomrule
\end{tabular}
\caption{Linear and nonlinear coclosed representatives of the input $\omega=(5,0,0,0,0)$ on the two-triangle graph.}
\label{tab:two_triangle}
\end{table}

\subsection{Computational certificate}
\label{sec:computational}

Theorem~\ref{thm:newton} supplies an explicit a priori bound on the number of iterations the self-concordant Newton method needs to reach the nonlinear representative. Starting from $\phi^{(0)}=0$, the initial energy is $\Phi_\omega(0)=\cosh 5-1\approx 73.21$, so the damped-phase iteration count is bounded by
\begin{equation}
\left\lceil\frac{\Phi_\omega(0)}{\theta_\star}\right\rceil=\left\lceil\frac{73.21}{\tfrac14-\log\tfrac54}\right\rceil=2726.
\end{equation}
Warm-starting instead from $\phi^{(0)}_{\mathrm{warm}}\in\Hzero$ chosen so that $\omega+d\phi^{(0)}_{\mathrm{warm}}=\PiH(\omega)$ lowers the initial energy to $\Phi_\omega(\phi^{(0)}_{\mathrm{warm}})\approx 5.97$ and the damped-phase bound to
\begin{equation}
\left\lceil\frac{5.97}{\theta_\star}\right\rceil=223.
\end{equation}
These are conservative worst-case bounds, not realistic iteration counts; once the iterate enters the local phase $\lambda_k<1/4$, convergence becomes quadratic and reaches machine precision in a handful of additional steps.

\section{Concluding remarks}

The results of this paper organize naturally along three scales. \emph{Variationally}, the $\cosh$-energy on the mean-zero slice admits a unique minimizer in every cohomology class (Theorem~\ref{thm:companion}), separated from every other representative by an explicit quadratic gap; combined with the self-concordance bound of Theorem~\ref{thm:self_concordance}, this minimizer can be computed by a certified two-phase Newton method with an explicit iteration count (Theorem~\ref{thm:newton}). \emph{Infinitesimally}, the resulting selector $\Picc$ is real analytic, its image is the smooth submanifold $\operatorname{arsinh}(\ker\delta)$, and at every point its differential is a weighted Hodge projector whose edge weights are determined by the output itself (Theorems~\ref{thm:smoothness} and~\ref{thm:weighted_hodge}); at the origin the weights trivialize, $D\Picc(0)=\PiH$, and the first nonlinear correction appears at cubic order (Theorem~\ref{thm:cubic}, Corollary~\ref{cor:multivar_cubic}). \emph{Globally}, the question of when $\Picc$ collapses to $\PiH$ on all of $C^1(G)$ is settled by a sharp combinatorial criterion (Theorem~\ref{thm:cactus}): agreement holds for every admissible edge potential simultaneously if and only if $G$ is a cactus graph, and the smallest connected simple counterexample---the two-triangle graph of Section~\ref{sec:example}---already exhibits a quantifiable departure.

What the cactus criterion really identifies is \emph{where nonlinearity first has somewhere to act}. On a cactus, the simple cycles occupy pairwise edge-disjoint supports, so an odd nonlinearity $\sigma$ acts on each cycle as a one-dimensional scalar rescaling and cannot couple distinct cycles; the linear condition $\delta z=0$ on each cycle already implies the nonlinear condition $\delta\sigma(z)=0$. On a non-cactus graph, any two overlapping cycles share at least one edge, and the elementary obstruction $\sigma(2t)\neq 2\sigma(t)$---ruled out only in the linear case---translates shared edges into a nonzero $\psi$-divergence. The criterion therefore depends only on cycle overlap in $G$, not on any further detail of the potential beyond admissibility.

Read alongside the cost-first discrete ledger of~\cite{PSTW2026Ledger}, the criterion supplies the complementary combinatorial input. There the canonical cost $J(x)=\tfrac12(x+x^{-1})-1$ governs a directed-graph ledger of recognition events, and a time-aggregated cycle-closure hypothesis is invoked to reconstruct a linear scalar potential on each connected component (their Theorems~T3 and T4). Theorem~\ref{thm:cactus} sharpens this picture from the opposite side: it identifies the precise combinatorial graphs on which the nonlinear coclosed selector built from the same cost coincides with the linear Hodge projector $\PiH$ \emph{without} appealing to cycle closure. Cactus graphs are exactly the topologies on which the linear and nonlinear ledger pictures agree, for every admissible potential.

Several directions remain natural. First, dropping evenness of $\psi$ breaks the odd symmetry of $\Picc$, invalidating the forward cactus argument through $\pm 1$-valued cycle forms; an analogue for strictly convex non-even potentials would likely require a different combinatorial model of $\ker\delta$. Second, moving to weighted or directed graphs makes the coclosed equation asymmetric and may alter the geometry of the nonlinear slice. Third, moving further to higher-dimensional simplicial complexes raises the prospect of a higher-order cactus criterion in which the obstruction involves overlapping higher-dimensional cycles rather than shared graph edges. Finally, on the quantitative side, the cubic expansion of Corollary~\ref{cor:multivar_cubic} invites sharper global estimates on $\Picc(\omega)-\PiH(\omega)$ away from the origin, and the a priori Newton counts in Section~\ref{sec:computational} are likely far from tight in practice.


\authorcontributions{Conceptualization, S.P.-G., A.T., and J.W.; methodology, S.P.-G., A.T., and J.W.; formal analysis, S.P.-G., A.T., and J.W.; writing---original draft preparation, S.P.-G., J.W.; writing---review and editing, S.P.-G., A.T., and J.W. All authors have read and agreed to the published version of the manuscript.}

\bibliographystyle{apsrev4-2}
\bibliography{Bib}

\end{document}